%

\documentclass[10pt]{article}

\usepackage{amsmath}
\usepackage{amssymb}

\usepackage{graphicx}

\usepackage{cite}

\usepackage{color} 

\usepackage{setspace} 

\topmargin 0.0cm
\oddsidemargin 0.5cm
\evensidemargin 0.5cm
\textwidth 16cm 
\textheight 21cm

\usepackage[labelfont=bf,labelsep=period,justification=raggedright]{caption}

\bibliographystyle{plos2009}


\date{}

\pagestyle{myheadings}

\usepackage{latexsym,array,delarray,amsthm,amssymb,epsfig,setspace}
\usepackage{subfigure}


\theoremstyle{plain}
\newtheorem{thm}{Theorem}

\newtheorem{prop}[thm]{Proposition}
\newtheorem{cor}[thm]{Corollary}

\newtheorem {example} {Example}
\theoremstyle{definition}
\newtheorem{defn}[thm]{Definition}
\newtheorem{ex}[thm]{Example}

\theoremstyle{remark}
\newtheorem*{rmk}{Remark}


\newcommand{\rr}{\mathbb{R}}


\newcommand{\bfa}{\mathbf{a}}
\newcommand{\bfb}{\mathbf{b}}
\newcommand{\bfc}{\mathbf{c}}


\newcommand{\cF}{\mathcal{F}}


\newcommand{\ind}{\mbox{$\perp \kern-5.5pt \perp$}}

\newcommand{\comment}[1]{}

\begin{document}

\begin{flushleft}
{\Large
\textbf{Structural identifiability of viscoelastic mechanical systems}
}
\\
Adam Mahdi$^1$, 
Nicolette Meshkat$^1$, 
Seth Sullivant$^1$
\\
Department of Mathematics, North Carolina State University, NC, USA
\\
$^1$ These authors contributed equally to this work.\\
$\ast$ E-mail: Corresponding amahdi@ncsu.edu
\end{flushleft}

\section*{Abstract}
We solve the local and global structural identifiability problems for  viscoelastic mechanical models represented by  networks of springs and dashpots.  We propose a very simple characterization of both local and global structural identifiability based on \emph{identifiability tables}, with the purpose of providing a guideline for constructing arbitrarily complex, identifiable spring-dashpot networks. We illustrate how to use our results 
in a number of examples and point to some applications in cardiovascular modeling.


\section*{Introduction}
Mathematical modeling is a prominent tool used to better understand complex mechanical or  biological systems \cite{Ottesen2011-Micro}.  
A common problem that arises when developing a model of a biological or mechanical system is that some of its parameters are unknown. 
This is especially important when those parameters have special meaning but cannot be directly measured. Thus a natural question arises: Can all, or at least some, of the model's parameters be estimated indirectly and \emph{uniquely} from observations of the system's input and output? This is the question of \emph{structural identifiability}. Sometimes the uniqueness holds only within a certain range.  In this case, we say that a system is only \emph{locally} structurally identifiable. 
There are numerous reasons why one would be interested in establishing identifiability. Structural identifiability is a necessary condition for the practical or numerical identifiability problem, which involves parameter estimation with real, and often noisy, data.  The unobservable biologically meaningful parameters of a model can only be determined (or approximated) if the model is structurally identifiable.  Moreover, optimization schemes cannot be employed reliably since they will find difficulties when trying to estimate unidentifiable parameters \cite{Banga2011-comp}.  The concept of structural identifiability was introduced for the first time in the work of Bellman and {\AA}str\"om \cite{BelAst70}. Since then, numerous techniques have been developed to 
analyze the identifiability of
 linear and nonlinear systems with and without controls \cite{WalLec81, VidBla00, VidBlaNoi01, Banga2011-comp, LitHeiLi10}; see also \cite{Miao11} for a review of different approaches.   

\smallskip

Viscoelastic mechanical models that utilize springs and dashpots in various configurations have been widely used in numerous areas of research including material sciences \cite{AnaAme06},  computer graphics \cite{TerFle1988}, and  biomedical engineering to describe mechanical properties of biological systems  \cite{Bland1965, Chris71-VEintro, Ros50, AkyJonWal90, GanCho96, FungMec, BurPLoS, Ack2013}.  To achieve a desirable response, networks with different numbers of springs and dashpots in various configurations have been constructed.  For example, it is well-known that the simplest models of viscoelastic materials such as Voigt (spring and dashpot in parallel) or Maxwell (spring and dashpot in series) do not offer satisfactory representation of the nature of real materials \cite{DieLekTur1998}. Thus more complicated configurations are usually constructed and analyzed \cite{BurPLoS}. 

\smallskip

In this paper we investigate the identifiability problem of viscoelastic models represented by an arbitrarily complex spring-dashpot network.  Although there exist numerous methods that can determine the type of identifiability of a system of ordinary differential equations, generally they are difficult to apply. Our results will show in a remarkably simple way how to verify whether the studied model is (locally or globally) structurally identifiable. In case it is unidentifiable, our method provides an explanation why  this is the case and how to reformulate the problem.  Moreover, the existing methods usually allow to establish the identifiability only \emph{a posteriori}, i.e.~after concrete systems have been established. Thus, we also introduce ``identifiability tables'', which allow not only to check but also to construct an arbitrarily complex \emph{identifiable} spring-dashpot network.

\begin{figure}[t!]
\begin{center}
\includegraphics[clip,trim={0 12cm 0 0},width=8.7cm]{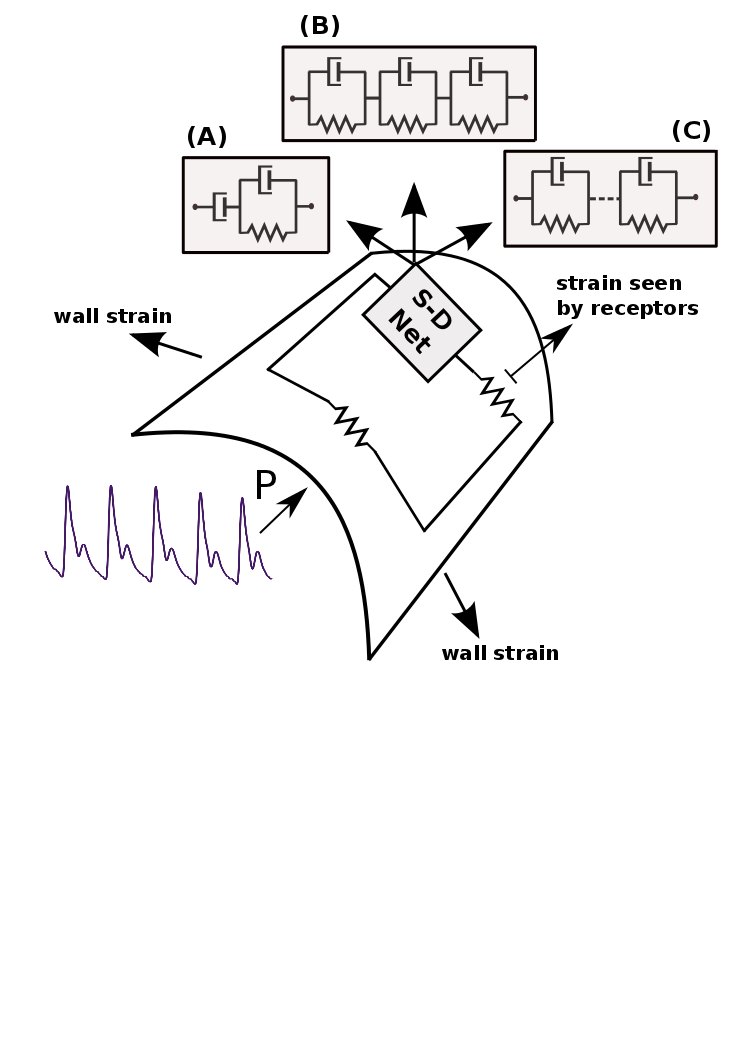}
\end{center}
\caption{Changing blood pressure (P) causes periodic expansion and contraction of the arterial wall. Spring-dashpot (S-D) networks  are often used in order to describe the biomechanical properties of the arterial tissue as well as the strain sensed by various receptors (e.g.~baroreceptors) embedded in the arterial wall. Typically a spring (representing a receptor's nerve ending) is combined in series with a S-D network (representing viscoelastic coupling of the nerves to the wall). Recently, several cardiovascular approaches have used the framework described above, in particular, choosing one of the following S-D networks: (A) Burgers-type model \cite{AlfPhD}; (B) three element Kelvin-Voigt body \cite{BugCowBea10}; (C) generalized Kelvin-Voigt model \cite{MahStuOttOlu13}.}\label{Fig:CV}
\end{figure}

\subsubsection*{Application to cardiovascular modeling}
A particular motivation for this work comes from cardiovascular modeling \cite{BugCowBea10, MahStuOttOlu13},  although the results of this paper can be applied to any viscoelastic modeling approach. 
\smallskip
 
\noindent{\it Arterial wall.} 
Changing blood pressure causes periodic expansion and contraction of the arterial wall (see Fig.~\ref{Fig:CV}). It is well-known that the stress-strain curves of the artery walls exhibit hysteresis, which is understood to be a  consequence of the fact that the wall is viscoelastic. Another manifestation of the viscoelasticity of the arterial tissue is the stress relaxation experiments under constant stretch (strain). Spring-dashpot (S-D) networks  are often used in order to describe the biomechanical properties of the arterial tissue  \cite{LeaTay1966, McDonaldsFlow, KalSch08}.  Identifiable networks can be determined using the results of this paper (see Theorem \ref{thm:parameqcoeff}).  

\smallskip

\noindent{\it Neural activity.} It is common to use the spring-dashpot network to describe the neural firing of various sensors (e.g.~muscle spindle, baroreceptors), see \cite{Houk66, Hasan83,AlfPhD, BugCowBea10}. Typically one assumes that the firing activity is proportional to the strain sensed by a spring connected in series with a spring-dashpot network, which represents a local integration of the nerve endings to the arterial wall (see Fig.~\ref{Fig:CV}).  Then the arterial wall and neural activity models are combined. Although separately each model is structurally identifiable, there is no guarantee that the resulting viscoelastic structure is identifiable. Thus, using our results given in  Theorem \ref{thm:localMT}, we can establish whether the combined viscoelastic model is identifiable, and if not, what needs to be modified.

\section*{Results and Discussion}

After reviewing basic concepts of viscoelasticity of systems, we present and discuss our main results related to local and global structural identifiability of such systems. Finally, we illustrate our results with a number of  examples from the literature.

\medskip

\subsubsection*{Spring-dashpot networks}

The ideal linear elastic material follows  Hooke's law $\sigma=E\epsilon$, where $E$ is a Young's modulus (or a spring constant), which describes the relationship between the stress $\sigma$ and the strain $\epsilon$. Analogously, the relation $\sigma= \eta \dot \epsilon$ describes the viscous material,  where $\dot\epsilon = d\epsilon/dt$ and $\eta$ is a viscous constant \cite{Flu75}. In the basic linear viscoelasticity theory, the elastic and viscous elements are combined. In this work, we shall be concerned with the problem of identifiability of networks of springs and dashpots that are essentially one-dimensional. The elements can be combined either in series or in parallel. In order to obtain the relationship between the total stress (force) $\sigma$ and the total strain (extension) $\epsilon$ for a given spring-dashpot network, we use two fundamental rules. For two viscoelastic elements connected in series, the stress is the same in both elements, but the total strain is the sum of individual strains on each element. On the other hand, for elements connected in parallel, the strain is the same for both elements, but the total stress is the sum of individual stresses on each element. Now we consider concrete viscoelastic networks, starting with the simplest configurations.

\begin{figure}
\begin{center}
\includegraphics[width=10cm]{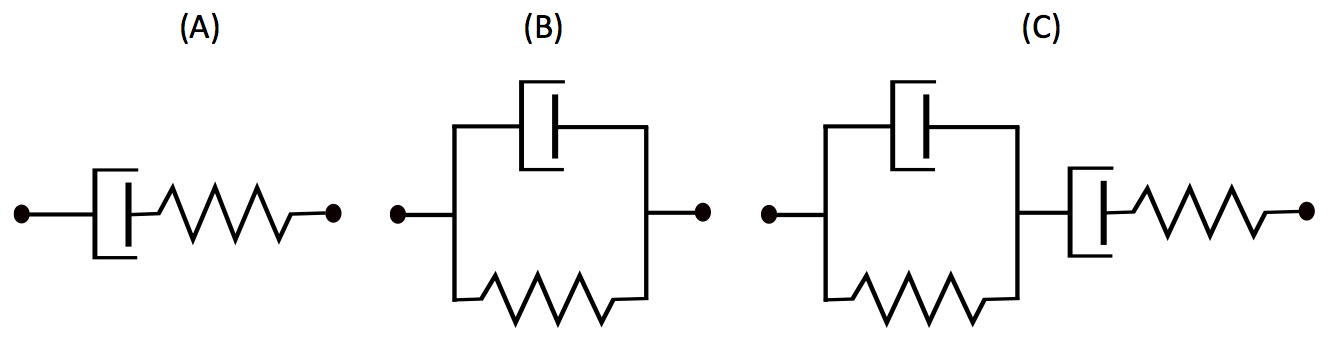}
\end{center}
\caption{Simple linear viscoelastic models. (A) Maxwell element, (B) Voigt element, (C) Burgers model.}\label{Fig:simple}
\end{figure}

\begin{example}[Maxwell element]\upshape\label{ex:M}
The series combination of a spring, denoted by its constant $E$, and a dashpot, denoted by its constant $\eta$,  is known as a Maxwell element (see Fig.~\ref{Fig:simple}(A)). Since the elements are connected in series, the stress $\sigma$ is the same on both elements and the total strain $\epsilon$ is the sum of strains $\epsilon_E$ and $\epsilon_\eta$ corresponding to the spring and dashpot, respectively. Now,  the relationship between the total strain and stress for this system is 
\begin{equation}\label{ce:M}
\dot\epsilon = \dot \sigma/E+\sigma/\eta.
\end{equation}
\end{example}

\begin{example}[Voigt element]\upshape\label{ex:V}
Another simple example is the Voigt element (also known as Kelvin or Kelvin-Voigt)  given in Fig.~\ref{Fig:simple}(B). Following the steps outlined in the previous example, we obtain the $\epsilon-\sigma$ relationship
\begin{equation}\label{ce:V}
E\epsilon+\eta \dot \epsilon=\sigma.
\end{equation}
\end{example}

\begin{example}[Burgers model]\upshape\label{ex:Burgers}
In our third example we consider a particularly popular four-element model, represented by a Maxwell element combined in series with a Voigt element, and known as the Burgers model ( Fig.~\ref{Fig:simple}(C)). Denote by subscript $m$ and $v$ the spring and viscous constants of the Maxwell and Voigt elements, respectively. Note that the stress $\sigma$ is the same on all three elements connected in series (Voigt, spring and dashpot). Eliminating the corresponding local strains, we obtain the following relationship
\begin{equation}\label{ce:Burgers}
E_m\ddot\epsilon + \frac{E_m E_v}{\eta_v}\dot\epsilon=\ddot \sigma + \Big[ \frac{E_m}{\eta_m} +\frac{E_m}{\eta_v}+\frac{E_v}{\eta_v}\Big]\dot\sigma+\frac{E_m E_v}{\eta_m\eta_v}\sigma.
\end{equation}
\end{example}

\medskip

\subsubsection*{Identifiability characterization}

First note (cf.~Examples \ref{ex:M}, \ref{ex:V}, and \ref{ex:Burgers}) that for any configuration of springs $\bar{E}=(E_1,...,E_N)$ and dashpots $\bar \eta=(\eta_1,...,\eta_M)$, the total strain--stress relationship can always be written as the following $(n+1)$-st order linear ordinary differential equation 
\begin{equation}\label{eq:ODE}
a_{n+1} \epsilon^{(n+1)}+a_n \epsilon^{(n)}+\cdots+a_0 \epsilon = b_n \sigma^{(n)}+\cdots+b_0 \sigma,
\end{equation}
where the coefficients $a_j=a_j(\bar E,\bar \eta)$ and $b_k=b_k(\bar E,\bar \eta)$ are functions of the spring and dashpot constants.  The precise value of $n$ and the
forms of  $a_j(\bar E,\bar \eta)$ and $b_k(\bar E,\bar \eta)$ will depend
on the particular structure of the spring-dashpot model.
Equation \ref{eq:ODE} is known as the \textit{constitutive equation}. In the context of spring-dashpot networks, identifiability concerns whether or not it is possible to recover the unknown parameters ($\bar E$ and $\bar \eta$) of the system from the governing equation of the model, given only the total stress $\sigma$ and total strain $\epsilon$. In other words, we assume that we know the stress and the strain at the bounding nodes only and ask if it is possible to determine the unknown parameters ($\bar E$ and $\bar \eta$).
In order to uniquely fix the coefficients of the constitutive equation \eqref{eq:ODE}, we require that \eqref{eq:ODE} be \emph{normalized} so that the leading term (in $\sigma$ or $\epsilon$, depending on the situation) is monic.  Thus, letting the $d$ non-monic coefficients of \eqref{eq:ODE} be represented by the vector $\bfc  =  (\bfa(\bar E, \bar \eta), \bfb(\bar E, \bar \eta))$, we have the following formal definition of identifiability.

\begin{defn} \label{defn:id} 
Let $\bfc$ be a function $\bfc:\Theta\rightarrow{\rr^{d}}$, where
$\Theta \subseteq \rr^{N+M}$ is the parameter space.
The model is \emph{globally identifiable} from $\bfc$ if and only if the map $\bfc$ is one-to-one.  The model is
\emph{locally identifiable} from $\bfc$ if and only if the map $\bfc$ is finite-to-one.  
The model is
\emph{unidentifiable} from $\bfc$ if and only if the map $\bfc$ is 
infinite-to-one.
\end{defn}
Note that local identifiability is equivalent to saying that around
each point in parameter space there exists a neighborhood on which the 
function $\bfc$ is one-to-one. For example, for the Burgers model considered in Example \ref{ex:Burgers}, the coefficient function $\bfc:\rr^4\to\rr^4$ is defined as
\[
{\bf c}:(E_m,E_v,\eta_m,\eta_v)\to\Big(E_m, \frac{E_m E_v}{\eta_v},  \frac{E_m}{\eta_m} +\frac{E_m}{\eta_v}+\frac{E_v}{\eta_v}, \frac{E_m E_v}{\eta_m\eta_v}\Big).
\]
Technically speaking, in this paper we will consider the slightly weaker notion
of \emph{generic global identifiability} (or generic local identifiability, or generic unidentifiability), where \textit{generic} means that the property holds almost everywhere.
We will omit the use of the term generic when speaking of identifiability. 

Definition \ref{defn:id} implies that if there are more parameters than non-monic coefficients, then the system must be unidentifiable.  Thus, a necessary condition for structural identifiability is that the number of parameters $\bar E, \bar \eta$ (elements of the network) is less than or equal to the number of non-monic coefficients in the constitutive equation \eqref{eq:ODE}. We will soon show that the number of non-monic coefficients is bounded by the number of parameters in spring-dashpot networks.  Thus, in this case, a necessary condition for structural identifiability is that the number of parameters and non-monic coefficients are equal.  We will prove that, remarkably, in the case of viscoelastic models represented by a spring-dashpot network, the converse to this statement holds as well.

\begin{thm} [Local identifiability] \label{thm:parameqcoeff} 
A  viscoelastic model represented by a spring-dashpot network is locally identifiable if and only if the number of non-monic coefficients of  the corresponding constitutive equation \eqref{eq:ODE} equals the total number of its moduli $E_j$ and viscosity parameters $\eta_k$.
\end{thm}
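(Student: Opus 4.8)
The plan is to translate Definition~\ref{defn:id} into an algebraic-geometric statement about the coefficient map $\bfc$ and then reduce the theorem to a single statement about algebraic independence. Recall that $\bfc$ is a rational map from the parameter space $\Theta\subseteq\rr^{N+M}$ into $\rr^{d}$, and that for such a map the generic fiber has dimension $(N+M)-r$, where $r$ is the rank of the Jacobian $\partial\bfc/\partial(\bar E,\bar\eta)$ at a generic point; equivalently $r$ is the transcendence degree over $\rr$ of the field generated by the $d$ coordinate functions of $\bfc$. Hence $\bfc$ is finite-to-one (locally identifiable) exactly when $r=N+M$. Since $r\le d$ and $r\le N+M$, and since the number of non-monic coefficients always satisfies $d\le N+M$ (as noted before the theorem), the equality $r=N+M$ forces $d=N+M$ together with algebraic independence of the $d$ coefficient functions. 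This already yields the ``only if'' direction: if $d<N+M$ then $r\le d<N+M$, the generic fiber is positive-dimensional, and the model is unidentifiable, recovering the necessary condition discussed above.

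It remains to prove the ``if'' direction: when $d=N+M$, the $N+M$ non-monic coefficients are algebraically independent, equivalently the Jacobian determinant of $\bfc$ is not identically zero. I would work in the Laplace domain, writing the dynamic stiffness of the network as a rational function $Z(s)=P(s)/Q(s)$ whose numerator and denominator carry precisely the coefficients $\bfa$ and $\bfb$ of the constitutive equation~\eqref{eq:ODE}. The advantage is the clean recursion under the two admissible operations: for a series connection $(P,Q)=(P_1P_2,\;Q_1P_2+Q_2P_1)$, and for a parallel connection $(P,Q)=(P_1Q_2+P_2Q_1,\;Q_1Q_2)$, with the base cases $Z=E$ for a single spring and $Z=\eta s$ for a single dashpot. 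This makes the class of networks amenable to an induction on the series--parallel construction tree.

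The inductive step would argue that the hypothesis $d=N+M$ (no slack between coefficients and parameters) propagates to the two subnetworks and that the composition introduces no new algebraic relation. Concretely, I would show that a drop in the coefficient count at a composition node corresponds exactly to a degenerate configuration---such as two springs in series or two dashpots in parallel coalescing into a single effective element, or a common factor appearing in $P$ and $Q$---and that when $d=N+M$ none of these degeneracies occurs, so each subnetwork is itself tight and its coefficient functions independent by the inductive hypothesis. The remaining task is to verify that the Jacobian of the combined coefficient vector has full rank, which I expect to follow by exhibiting a block or triangular structure of $\partial\bfc/\partial(\theta_1,\theta_2)$ assembled from the subnetwork Jacobians, or equivalently by checking that the new coefficients produced by the recursion contribute directions not already spanned by the subnetworks.

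The main obstacle is the bookkeeping around normalization and cancellation: after making the leading coefficient monic, one must track how the degrees of $P$ and $Q$, the possible structural zeros (e.g.\ absence of a $\sigma$- or $\epsilon$-constant term), and common factors of $P$ and $Q$ evolve through series and parallel composition, and prove that the count $d$ equals $N+M$ if and only if the network is free of the degeneracies above. This is precisely the combinatorial content that the ``identifiability tables'' and the refined result of Theorem~\ref{thm:localMT} are designed to encode, and I would use that structural classification to certify, in each non-degenerate case, that the Jacobian determinant is a nonzero rational function---equivalently, that the coefficient functions are algebraically independent---thereby completing the ``if'' direction.
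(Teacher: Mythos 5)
Your overall framing is sound and runs parallel to the paper's: the translation of finite-to-one into generic Jacobian rank (equivalently, algebraic independence of the coefficient functions), the ``only if'' direction via $r\le d<N+M$ together with the bound $d\le N+M$, the induction on the series--parallel construction tree, and your Laplace-domain recursions $(P,Q)=(P_1P_2,\,Q_1P_2+Q_2P_1)$ and $(P,Q)=(P_1Q_2+P_2Q_1,\,Q_1Q_2)$, which are exactly the paper's operator identities \eqref{eq:series} and \eqref{eq:parallel}. The propagation of the tight count $d=N+M$ down to the subnetworks also matches the paper's inductive bookkeeping.

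The genuine gap is the one step you defer: proving that, in the tight case, the relevant square matrix is generically nonsingular. That is the mathematical heart of the theorem, and ``exhibiting a block or triangular structure of $\partial\bfc/\partial(\theta_1,\theta_2)$ assembled from the subnetwork Jacobians'' will not deliver it: the composed coefficients are convolutions of the two subnetworks' coefficient vectors, so every column of the Jacobian entangles both subnetworks and no triangular structure in the parameters is available. The paper's route is different and essential: since the subnetworks are locally identifiable, one may treat the coefficients of $L_1,L_2,L_3,L_4$ as free, fix one of the finitely many monic factorizations $f=L_1L_3$ (finiteness is automatic for monic polynomials), and observe that $g=L_1L_4+L_2L_3$ is then a \emph{linear} system $(G\ \ H)$ in the unknown coefficients of $L_2,L_4$ (Proposition \ref{prop:abinvert}); the decisive lemma (Proposition \ref{prop:squarefullrank}) shows a square $(G\ \ H)$ is generically invertible by reordering its columns to expose the Sylvester matrix of $L_1$ and $L_3$ bordered by triangular blocks, so nonvanishing of the determinant rests on the resultant of $L_1$ and $L_3$ being generically nonzero, i.e., on generic coprimality of the two strain operators. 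Your proposal never identifies this coprimality/resultant mechanism, and without it the assertion that ``the composition introduces no new algebraic relation'' is precisely what remains to be proved. Moreover, your fallback---certifying nonvanishing of the Jacobian determinant case by case using the identifiability tables and Theorem \ref{thm:localMT}---is circular, since in the paper the identifiability entries of those tables and Theorem \ref{thm:localMT} itself are deduced from Theorem \ref{thm:parameqcoeff}; and a case-by-case computation could not substitute for the structural lemma in any event, because the ``cases'' are infinite families indexed by the degrees $n_1,n_2$, not finitely many fixed-size matrices.
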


Note that although the constitutive equation \eqref{eq:ODE} is a linear differential equation,
its coefficients considered as functions of spring and viscous constants are not linear
functions of the parameters (see \eqref{ce:Burgers}). 
 Thus, Theorem \ref{thm:parameqcoeff} allows to reduce the difficult problem of checking
 one-to-one or finite-to-one behavior of nonlinear functions  to simply counting the number of parameters (springs and dashpots) and 
non-monic coefficients of the constitutive equation and asking whether the two numbers are equal. The positive answer implies local identifiability, whereas a negative answer implies unidentifiability.  Consider, for example, the Maxwell and Voigt elements, and the Burgers model. We note that the constitutive equations \eqref{ce:M}, \eqref{ce:V}, and \eqref{ce:Burgers} for all three models are already in the normalized form. Now, simply by counting the number of parameters and the non-monic coefficients of the constitutive equations, we  see that the two are equal for each model. Thus, by the above theorem, all three models are locally structurally identifiable. 

\begin{figure}[t!]
\begin{center}
\includegraphics[width=8cm]{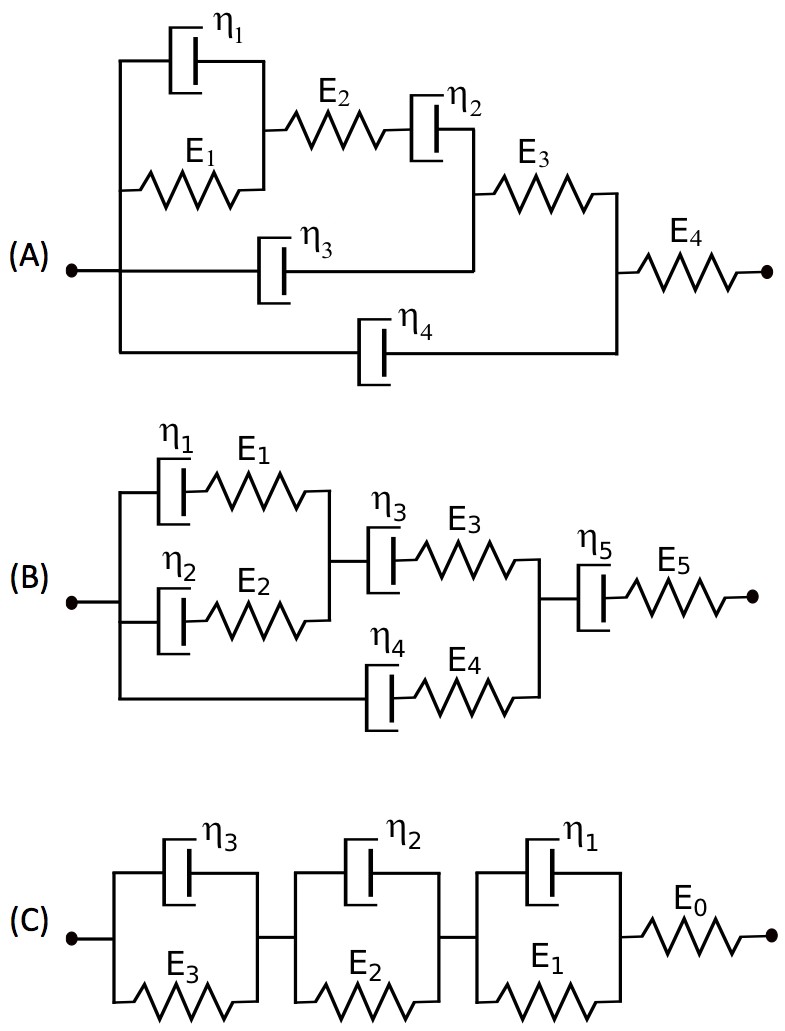}
\end{center}
\caption{(A) Multi-parameter linear viscoelastic model considered by Dietrich et al. \cite{DieLekTur1998}. (B) Ten element viscoelastic model studied in \cite{Ros50}, (C) A viscoelastic model of used to describe the baroreceptor nerve ending coupling to the arterial wall (see  \cite{BugCowBea10} and \cite{MahOttOlu12, MahStuOttOlu13}).}\label{Fig:turski}
\end{figure}

\medskip

\subsubsection* {Constructing identifiable models} 
Now we examine when combining two identifiable models results also in an identifiable model. This will allow us to construct arbitrarily complex and identifiable spring-dashpot networks.  

\smallskip

We start with an observation, which we prove in the following section, related to the possible form of any differential equation that describes a spring-dashpot network. 
\begin{prop}\label{Prop:types}
Every spring-dashpot network, given by equation \eqref{eq:ODE}, has one of the four possible types
\begin{equation}\label{types}
\begin{aligned}
&{\bf Type\,\, A:}\quad b_{0}b_{n} \ne 0, \quad a_{n+1}=0,\quad  a_0a_n\ne 0\\
&{\bf Type\,\,  B:}\quad b_{0}b_{n} \ne 0, \quad   a_{0}=0,\quad a_1a_{n+1}\ne 0\\
&{\bf Type\,\,  C:}\quad b_{0}b_{n} \ne 0, \quad a_0a_{n+1}\ne 0\\
&{\bf Type\,\,  D:}\quad b_{0}b_{n} \ne 0, \quad a_{n+1}=a_0=0,\quad a_1a_n\ne 0.
\end{aligned}
\end{equation}
\end{prop}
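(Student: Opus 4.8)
The plan is to pass to the Laplace (frequency) domain, where the whole network collapses to a single rational transfer function, and then to read off the four types from the behaviour of that function at the two ends $s=0$ and $s=\infty$. First I would encode each element by its impedance $Z(s)=\hat\sigma/\hat\epsilon$: a spring contributes the constant $E$ and a dashpot contributes $\eta s$. In this language the two combination rules of the text become $Z=Z_1+Z_2$ for a parallel connection and $Z=Z_1Z_2/(Z_1+Z_2)$ for a series connection, so every spring-dashpot network has impedance a rational function $Z(s)=P(s)/Q(s)$. Comparing with \eqref{eq:ODE} identifies $P(s)=\sum_j a_j s^j$ (the $\epsilon$-side) and $Q(s)=\sum_k b_k s^k$ (the $\sigma$-side); writing $Z$ in lowest terms, I would reduce the proposition to a single statement about how $Z$ vanishes or blows up at $0$ and at $\infty$.

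Accordingly, let $\alpha$ and $\beta$ be the exponents in the leading behaviours $Z\sim c_0 s^{\alpha}$ as $s\to 0$ and $Z\sim c_\infty s^{\beta}$ as $s\to\infty$, with $c_0,c_\infty\neq 0$. I claim the proposition is exactly the assertion $\alpha,\beta\in\{0,1\}$ together with a short dictionary. Indeed, $\alpha\ge 0$ says $Z$ has no pole at $0$, which in lowest terms forces $Q(0)=b_0\neq 0$, while $b_n\neq 0$ is the definition of $n=\deg Q$; next $\beta=\deg P-\deg Q$ governs the top of $P$, so $\beta=1\Leftrightarrow a_{n+1}\neq 0$ and $\beta=0\Leftrightarrow(a_{n+1}=0,\ a_n\neq0)$; and $\alpha$ equals the order of the lowest nonzero term of $P$, so $\alpha=0\Leftrightarrow a_0\neq0$ and $\alpha=1\Leftrightarrow(a_0=0,\ a_1\neq0)$. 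The four admissible pairs $(\alpha,\beta)\in\{0,1\}^2$ then match Types $A,B,C,D$ one-for-one, and $\beta\le 1$ incidentally explains why $\epsilon$ appears only up to order $n+1$.

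The core is the induction proving $\alpha,\beta\in\{0,1\}$, run over the series-parallel assembly of the network. For the base cases a spring has $(\alpha,\beta)=(0,0)$ and a dashpot has $(\alpha,\beta)=(1,1)$. For the inductive step I would substitute $Z_i\sim c_i s^{\alpha_i}$ near $0$ (resp.\ $Z_i\sim c_i s^{\beta_i}$ near $\infty$) into the two rules: a parallel connection $Z_1+Z_2$ has order $\min(\alpha_1,\alpha_2)$ at $0$ and $\max(\beta_1,\beta_2)$ at $\infty$, and a series connection $Z_1Z_2/(Z_1+Z_2)$ has order $\max(\alpha_1,\alpha_2)$ at $0$ and $\min(\beta_1,\beta_2)$ at $\infty$. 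Since $\{0,1\}$ is closed under $\min$ and $\max$, membership is preserved and the induction closes.

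The step needing genuine care — and the main obstacle — is ruling out leading-order cancellation in the sums $Z_1+Z_2$, since a cancellation could shift an order outside $\{0,1\}$. I would handle this by strengthening the induction hypothesis to record that the leading coefficients $c_0,c_\infty$ are strictly positive; because $E,\eta>0$, positivity is inherited through the sums, products, and quotients of the two rules, so the coefficient of the lowest (resp.\ highest) power in $Z_1+Z_2$ is a sum of positive numbers and cannot vanish. The only remaining bookkeeping is to check that reducing $Z$ to lowest terms leaves $\alpha,\beta$ unchanged (they are intrinsic to $Z$) while ensuring $Q(0)\neq 0$ and $\deg Q=n$, which is exactly what converts the order statements into the stated conditions on the $a_j$ and $b_k$. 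Finally I would note that the argument uses only that the network is built by series and parallel operations, which is precisely the modeling hypothesis in force.
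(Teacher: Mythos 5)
Your core induction is sound, and it is a genuinely different route from the paper's: the paper proves the proposition by pushing the shapes $[n_i,m_i]$ of the four operators through the series/parallel recursion \eqref{eq:series}, \eqref{eq:parallel} and checking all twenty pairings in Tables \ref{seriestable} and \ref{paralleltable}, whereas you collapse each network to the single rational function $Z$ and reduce the twenty cases to the closure of $\{0,1\}$ under $\min$ and $\max$ (your dictionary $A=(0,0)$, $B=(1,1)$, $C=(0,1)$, $D=(1,0)$ reproduces the paper's tables exactly). Your strengthened hypothesis that the leading coefficients of $Z$ at $0$ and $\infty$ are positive is also a more explicit treatment of cancellation in $L_1L_4+L_2L_3$ than the paper's implicit appeal to genericity.

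There is, however, one genuine gap, at the interface between $Z$ and the statement actually being proved. The proposition concerns the coefficients $a_j,b_k$ of the constitutive equation \eqref{eq:ODE}, and your dictionary converts $(\alpha,\beta)\in\{0,1\}^2$ into those coefficient conditions only under the assumption that the constitutive equation \emph{is} the lowest-terms representation of $Z$ — equivalently, given $\alpha,\beta\in\{0,1\}$, that its $\sigma$-side polynomial satisfies $b_0=Q(0)\neq 0$. That assumption is not intrinsic to $Z$, and it is not settled by ``reducing $Z$ to lowest terms'': the constitutive equation is produced by the recursion \eqref{eq:series}, \eqref{eq:parallel}, in which common factors genuinely occur. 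For two Maxwell elements in series one has $L_1=L_3=d/dt$, so both sides of $L_1L_3\epsilon=(L_1L_4+L_2L_3)\sigma$ are divisible by $d/dt$, and the equation is only defined after dividing by $\gcd(L_1,L_3)$. If a factor of $s$ survived such a division, the true equation would have $b_0=0$ and would satisfy none of the four types, while your argument — which only ever sees the already-cancelled $Z$ — would still assign it a type; you would then have proved the proposition for a different, reduced equation rather than for the equation in the statement. To close the gap, push your positivity idea from $Z$ down to the polynomials themselves: carry through the induction that the lowest-order nonzero coefficient on each side of the recursion's output is strictly positive, so that the constant term of $L_1L_4+L_2L_3$ (respectively $L_2L_4$), after extracting the forced factor of $s$ in the $\gcd$ step, is a sum of products of positive numbers and hence nonzero. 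This polynomial-level bookkeeping is exactly what the paper's shape tables (the ``$[\,\cdot\,,0]$ in $\sigma$'' columns together with the remark on dividing by $d/dt$) accomplish, and it is the one part of their argument that the intrinsic impedance formalism cannot replace.
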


Recall that $n$ is the highest derivative of the stress component $\sigma^{(n)}$, which appears in the total strain-stress equation \eqref{eq:ODE}.  Now we illustrate the different types of networks defined in the above proposition by considering the simplest elements.  

\begin{ex}
For  a spring, given by  $E\epsilon=\sigma$,  we have $n=0$ (only $\sigma$ appears in the constitutive equation, but none of its derivatives),  $a_1=a_{n+1}=0$, and $a_0=a_n=E\ne0$. Therefore a spring is of type A. Note that for a dashpot, which is given by $\eta \dot \epsilon = \sigma$, we also have  $n=0$ but $a_0=0$ and $a_1=a_{n+1}=\eta\ne 0$. Thus,  according to notation given in Proposition \ref{Prop:types}, a dashpot is of type B. 
For the Voigt element, given by \eqref{ce:V}, we have $n=0$ as well as $a_0=E$ and $a_1=a_{n+1}=\eta$ (that is  $a_0a_{n+1}\ne 0$). We conclude that it is of type C. Finally, a Maxwell element is given by \eqref{ce:M}. Note that here  $n=1$ (since $\dot \sigma$ appears in \eqref{ce:M}),  $a_{0}=0$, $a_{2}=a_{n+1}=0$, and $a_1=a_n=1\ne 0$. Thus a Maxwell element is of type D.
\end{ex}

Once the constitutive equation has been determined for a given spring-dashpot network, it is very easy to establish the type that it belongs to. Unfortunately, $n$ does not always have a physical significance. The value of $n$ is determined by the specific network and cannot be easily related to the number of springs and dashpots as we will illustrate later on.

\begin{thm}[Local identifiability]\label{thm:localMT}
Consider two locally identifiable spring-dashpot systems $N_{1}$ and $N_{2}$ of one of the four types $A$, $B$, $C$, $D$.  Then the new model resulting in joining $N_{1}$ and $N_{2}$ either in parallel or in series is of the type indicated by the Identifiability Tables (Table \ref{thetable}). 
The letter \emph{u} indicates that the network is unidentifiable, otherwise it is identifiable of the given type.
\end{thm}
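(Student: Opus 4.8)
The plan is to pass to the Laplace (frequency) domain, where the constitutive equation \eqref{eq:ODE} becomes $G(s):=\Sigma(s)/E(s)=Q(s)/P(s)$ with $Q(s)=\sum_j a_j s^j$ the strain-side polynomial and $P(s)=\sum_k b_k s^k$ the stress-side polynomial, taken in lowest terms. Since $b_0 b_n\neq 0$ always, $P$ has degree exactly $n=\deg P$ and does not vanish at $s=0$, so the only freedom in the asymptotics of $G$ sits at the two ends. Writing $p$ for the order of vanishing of $G$ at $s=0$ and $q$ for its order of growth at $s=\infty$, one reads off directly from \eqref{types} (equivalently from Proposition \ref{Prop:types}) that $p,q\in\{0,1\}$ and that the four types are exactly the four cases $A=(p,q)=(0,0)$, $B=(1,1)$, $C=(0,1)$, $D=(1,0)$, i.e.\ the four combinations of $G(0)$ being finite nonzero versus zero and $G(\infty)$ being finite nonzero versus infinite. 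The first step is to record this dictionary together with the coefficient count: collecting the coefficients of $Q$ and $P$ that are not structurally forced to vanish (those of $P$ from degree $0$ to $n$, those of $Q$ from degree $p$ to $n+q$) and subtracting one for the monic normalization shows that the constitutive equation has exactly $2n+1+q-p$ non-monic coefficients.

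Next I would write down the composition laws. Joining in \emph{parallel} adds stresses at equal strain, so $G=G_1+G_2$; joining in \emph{series} adds strains at equal stress, so $1/G=1/G_1+1/G_2$. Reading the leading behaviour at $s=0$ and $s=\infty$ off these identities gives the combined type at once: in parallel the vanishing order at $0$ is $\min(p_1,p_2)$ and the growth order at $\infty$ is $\max(q_1,q_2)$, whereas in series (where the compliances $1/G$ add, dualizing $0\leftrightarrow\infty$) the two orders become $\max(p_1,p_2)$ and $\min(q_1,q_2)$. Substituting the four types $A,B,C,D$ into these $\min/\max$ rules produces the type entries of both tables.

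The third step converts this into identifiability via Theorem \ref{thm:parameqcoeff}. Joining two networks adds their elements, so the combination has $k_1+k_2$ parameters, and since each $N_i$ is identifiable, $k_i$ equals its own non-monic count; hence the combination is identifiable if and only if $k_1+k_2$ equals the non-monic count $2n_{\mathrm{comb}}+1+q_{\mathrm{comb}}-p_{\mathrm{comb}}$ of the combined network. This needs the order $n_{\mathrm{comb}}=\deg P_{\mathrm{comb}}$ of the reduced combined fraction. For parallel, $P_{\mathrm{comb}}=P_1P_2$ shares no factor with its numerator generically, so $n_{\mathrm{comb}}=n_1+n_2$; for series, the numerator $Q_1Q_2$ and denominator $Q_1P_2+Q_2P_1$ share exactly the factor $s^{\min(p_1,p_2)}$ at the origin, whose cancellation gives $n_{\mathrm{comb}}=n_1+n_2+\max(q_1,q_2)-\min(p_1,p_2)$. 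Substituting $n_{\mathrm{comb}}$, $p_{\mathrm{comb}}$, $q_{\mathrm{comb}}$ and simplifying collapses the criterion to a clean form: the parallel combination is identifiable exactly when $\max(p_1,p_2)=1$ and $\min(q_1,q_2)=0$, and the series combination exactly when $\max(q_1,q_2)=1$ and $\min(p_1,p_2)=0$; the remaining cells are marked $u$. A final check that all sixteen entries of each table agree with these two conditions completes the proof.

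The main obstacle is the genericity bookkeeping, and in particular the series cancellation. One must justify that no unexpected common factors appear, so that $n_{\mathrm{comb}}$ is as claimed, and that the leading and trailing coefficients controlling $p_{\mathrm{comb}}$ and $q_{\mathrm{comb}}$ do not vanish. The clean way to handle this uses the fact that the coefficients of $N_1$ depend only on the parameters of $N_1$ and those of $N_2$ only on the parameters of $N_2$: any quantity claimed nonzero, such as a sum $\beta_1+\beta_2$ of the two high-frequency moduli or a value $Q_1(r)P_2(r)$ at a root $r$ of $P_1$, is a polynomial in these two disjoint parameter blocks that is not identically zero, hence is nonzero generically. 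The $s^{\min(p_1,p_2)}$ cancellation in the series case is also exactly the mechanism that destroys identifiability in entries such as $B+B$ (two dashpots in series collapse to a single dashpot), so getting this degree accounting right is simultaneously the crux of the argument and the source of the $u$ entries.
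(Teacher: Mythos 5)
Your proposal is correct and takes essentially the same route as the paper: both arguments reduce the theorem to Theorem \ref{thm:parameqcoeff} (local identifiability $\Leftrightarrow$ the number of parameters equals the number of non-monic coefficients) and then perform shape/degree bookkeeping for the series and parallel combinations, including the cancellation of the common factor at the origin and the genericity argument based on the two disjoint parameter blocks. Your Laplace-domain $(p,q)$ encoding with the $\min/\max$ composition rules and closed-form criteria is a uniform repackaging of the paper's case-by-case Tables \ref{seriestable} and \ref{paralleltable} (the paper already treats the operators $L_i$ as polynomials in $d/dt$), so the two proofs verify the same twenty cases.
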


There are several ways one could use the above theorem. One way is to establish the local identifiability of a given spring-dashpot network. Contrary to our similar result given in Theorem \ref{thm:parameqcoeff}, this can be done without actually calculating
the constitutive equation.  We will show how to apply Theorem \ref{thm:localMT} to establish structural identifiability after first introducing some notation. 
Given any  two spring-dashpot models $M$ and $N$, we use the following notation 
$(M \lor N)$ and   $(M \land N)$ to denote respectively the parallel  and series combination of $M$ and $N$.  Let $\cF$ denote the function that takes a spring and dashpot model
$M$ and outputs its type ($A,B,C,D$) if it is locally identifiable, and $u$ if
it is unidentifiable.  To apply $\cF$ to a complicated model built up from 
springs and dashpots using series and parallel connections, we replace any springs and dashpots with their respective types $A$ and $B$  as well as the operations $\lor$ and $\land$ with $\oplus$ and $\odot$, respectively. Then we
apply the operations in the Identifiability Tables (see Table \ref{thetable}).

\begin{table}
\caption{
\bf{Identifiability Tables.}}
\hspace{2cm}
\subtable[Parallel connection]{
\begin{tabular}{c|ccccc}
$ \oplus$&{\bf A}&{\bf B}&{\bf C}&{\bf D}& {\bf u}\\ \hline
{\bf A}&u&C&u&A&u\\
{\bf B}&C&u&u&B&u\\
{\bf C}&u&u&u&C&u\\
{\bf D}&A&B&C&D&u\\
{\bf u}&u&u&u&u&u
\end{tabular}
}
\hspace{2cm}
\subtable[Series connection]{
\begin{tabular}{c|ccccc}
$\odot$&{\bf A}&{\bf B}&{\bf C}&{\bf D}& {\bf u}\\ \hline
{\bf A}&u&D&A&u&u\\
{\bf B}&D&u&B&u&u\\
{\bf C}&A&B&C&D&u\\
{\bf D}&u&u&D&u&u\\
{\bf u}&u&u&u&u&u
\end{tabular}
}
\begin{flushleft}
When connecting two identifiable spring-dashpot networks of one of the types $A$, $B$, $C$, $D$, or an unidentifiable $u$  either in series or in parallel, the above tables establish the type of the resulting identifiable system. If the resulting structure is unidentifiable it is indicated by $u$. For example, a parallel connection of two networks of types $A$ and $D$ gives rise to an identifiable network of type $A$ (see (a)), but the series connection results in an unidentifiable structure (see (b)).
\end{flushleft}\label{thetable}
\end{table}

\begin{example}[Local identifiability of the Maxwell element]\upshape
Note that the Maxwell model shown in Fig. \ref{Fig:simple}(A) can be symbolically written as 
\[
M  = E\land \eta.
\]  
In this formula,  we simply replace the spring and  the dashpot with $A$ and $B$, respectively, as well as the operations $\lor$ and $\land$ with $\oplus$ and $\odot$, respectively, to obtain 
\[
\cF(M)=(A\odot B)= D. 
\]
Thus we conclude that the Maxwell model is locally identifiable and is of type $D$.
\end{example}

\begin{example}[Local identifiability of the Burgers model]\upshape\label{Ex:burgers}
Similarly, the Burgers model shown in Fig. \ref{Fig:simple}(C) can be symbolically written as 
\[
M = (E_v\lor \eta_v)\land(E_m\land \eta_m).
\]
To check the local identifiability, we find $\cF(M)$ and use Table \ref{thetable} to obtain
\[
\cF(M)=\overbrace{(A\oplus B)}^C\odot\overbrace{(A\odot B)}^D
= C\odot D=D.
\]
We conclude that the Burgers model is locally identifiable and of type $D$. 
\end{example}

In the next example we show how we can easily establish local structural identifiability of a more complicated network.

\begin{example}[Dietrich et al. \cite{DieLekTur1998}]\upshape\label{Ex:turski}
Consider a viscoelastic material studied in \cite{DieLekTur1998} and represented by a spring-dashpot network shown in Fig.~\ref{Fig:turski}(A). It can be symbolically represented by 
\begin{equation}\label{eq:ex6}
M=\Big[(((((E_1\lor\eta_1)\land E_2)\land\eta_2)\lor\eta_3)\land E_3)\lor\eta_4\Big]\land E_4.
\end{equation}
Again, we can verify the local identifiability of the above model using Table~\ref{thetable} and obtain
\[
\begin{aligned}
\cF(M)&=\Big[((((\overbrace{(A\oplus B)}^C\odot A)\odot B)\oplus B)\odot A)\oplus B\Big]\odot A\\
&=\Big[(((\overbrace{(C\odot A)}^A\odot B)\oplus B)\odot A)\oplus B\Big]\odot A\\
&=\Big[((\overbrace{(A\odot B)}^D\oplus B)\odot A)\oplus B\Big]\odot A=\ldots = D\\
\end{aligned}
\]
This simple computation confirms that the model is locally structurally identifiable.
\end{example}

Our method can also verify if a network is  unidentifiable, providing the reason for the lack of its identifiability. Consider the following example. 

\begin{example}[Unidentifiable model]\upshape
Consider a viscoelastic model used in  \cite{Ros50} and shown in Fig.~\ref{Fig:turski}(B). Using the notation previously introduced,  it can symbolically be written as 
\[
M=\Big[(((E_1\land\eta_1)\lor(E_2\land\eta_2))\land E_3\land\eta_3)\lor (E_4\land\eta_4)\Big]\land E_5\land \eta_5.
\]
Now applying Table \ref{thetable}, we obtain
\[
\begin{aligned}
\cF(M)
&=\Big[((\overbrace{(A\odot B)}^D\oplus \overbrace{(A\odot B)}^D)\odot \overbrace{(A\odot B)}^D)\oplus \overbrace{(A\odot B)}^D\Big]\odot \overbrace{A\odot B}^D\\
&=\Big[(\overbrace{(D\oplus D)}^D\odot D)\oplus D\Big]\odot D\\
&=\Big[\overbrace{(D\odot D)}^{u}\oplus D\Big]\odot D=\emph{u}.
\end{aligned}
\]
Whenever Table \ref{thetable} indicates $u$ (i.e.~the corresponding substructure is unidentifiable), this inevitably leads to the whole model being unidentifiable. Moreover, our method can also explain what is the reason for the lack of identifiability.  In this example the situation is simple: joining in series a Maxwell element (type D) with a generalized Maxwell model leads to an unidentifiable network.
\end{example}

So far we have considered only \emph{local} identifiability of mechanical systems. Now we complete the presentation and discussion of our results by introducing a criterium, which establishes when a given network is \emph{globally} structurally identifiable.

\begin{thm}[Global identifiability]\label{Thm:global}
A viscoelastic model represented by a spring-dashpot network is globally identifiable if and only if it is locally identifiable and the network is constructed by adding either in parallel or in series at the bounding nodes exactly one basic element (spring or dashpot) at a time.
\end{thm}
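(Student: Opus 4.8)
The plan is to pass from the constitutive equation \eqref{eq:ODE} to its transfer function and to read identifiability off the uniqueness of a continued-fraction (ladder) expansion. Writing $s$ for the differential operator, I encode \eqref{eq:ODE} by the stiffness $G(s)=P(s)/Q(s)$ (stress per unit strain) and the compliance $J(s)=1/G(s)$, where $P,Q$ is the coprime pair whose coefficients are, up to the monic normalization, exactly the entries of $\mathbf{c}$. The combination rules are then additive: a series join gives $J=J_1+J_2$ and a parallel join gives $G=G_1+G_2$, a single spring contributes the constant $E$ to $G$, and a single dashpot the term $\eta s$. Global identifiability is precisely injectivity of the map from $(\bar E,\bar\eta)$ to this coprime pair, and since series and parallel are interchanged by the duality $G\leftrightarrow J$, it suffices to argue one of them and dualize.

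For sufficiency I would induct on the number of elements. A single spring or dashpot is trivially globally identifiable. For the inductive step, a network built one element at a time is $N'=N$ with one basic element attached at the bounding nodes, so $G'$ (or, in the series case, $J'$) is obtained from that of $N$ by adding a single term $E$, $\eta s$, $1/E$, or $1/(\eta s)$. This displays the normalized pair of $N'$ as the outermost layer of a continued-fraction expansion whose remainder is the pair of $N$. Local identifiability, which by Theorem \ref{thm:parameqcoeff} means the parameter count equals the number of non-monic coefficients, guarantees that this layer is non-degenerate — these are exactly the combinations that the Identifiability Tables of Theorem \ref{thm:localMT} do not mark with $u$ — so the outer layer is read off uniquely from $(P',Q')$; peeling it off recovers the new parameter together with the normalized pair of $N$. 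Applying the induction hypothesis to $N$ then shows the coefficient-to-parameter map is solvable one parameter at a time, hence injective.

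For necessity, injectivity trivially forces finite-to-one, giving local identifiability for free. For the structural claim I would argue the contrapositive, starting from a combinatorial lemma proved by structural induction using associativity of series and of parallel to merge consecutive like operations: a network admits no one-element-at-a-time construction if and only if its canonical (merged) series--parallel tree has an internal node with two or more composite children. Take such a node, say a parallel one; in the merged tree its composite children are series-rooted, and local identifiability excludes the pole-free degeneracies (e.g.\ two Voigt bodies in parallel, where $C\oplus C=u$), so each such child $N_i$ contributes at least one finite pole that is a free parameter, whereas the leaf children contribute none that must be located. The denominator of the normalized $G=G_1+G_2+\cdots$ records these poles only through its coefficients, i.e.\ as an unordered multiset, so interchanging a pole of $N_1$ with one of $N_2$ produces a second parameter point with the same $G$. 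Local identifiability makes this alternative family finite, but it is nonempty and generically distinct from the original, so the fiber has more than one point and the network is not globally identifiable.

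The main obstacle is this necessity direction: I must show that the pole-reassignment really yields a genuinely different preimage that still lies in the parameter space, uniformly across all type combinations $A,B,C,D$ occurring at the offending node. My plan is to reduce to the minimal offending configuration — two composite children with every other child a single element — and to compute the reassigned solution explicitly, using that the denominator fixes the elementary symmetric functions of the poles while the numerator then determines the remaining constants for each assignment. I expect to invoke the vanishing and nonvanishing constraints of Proposition \ref{Prop:types} to certify that the swapped assignment factors admissibly and stays in the (generically positive) parameter region on an open set, which is all that generic global identifiability requires. By contrast I expect the sufficiency direction to be routine once the continued-fraction peeling is set up, the only care being to appeal to the tables of Theorem \ref{thm:localMT} to exclude the degenerate layers.
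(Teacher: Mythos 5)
Your proposal is correct in its essential mechanisms, and those mechanisms are in fact the same two that drive the paper's proof, but you package them in a genuinely different architecture. The paper stays with the differential operators: it reduces each join to a \emph{shape factorization problem} $f=L_1L_3$, $g=L_1L_4+L_2L_3$, shows the solution is unique exactly when $L_1$ or $L_3$ is a monomial (otherwise one generically exchanges roots between $L_1$ and $L_3$ and re-solves the linear system for $L_2,L_4$), and then proves a structural corollary: a globally identifiable series (resp.\ parallel) join must have a spring, dashpot, or Maxwell (resp.\ Voigt) element as one of its two factors. Induction on the last join, plus the remark that a Maxwell element added in series (or Voigt in parallel) is itself two one-element steps, finishes the proof. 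Your route translates this into transfer functions: your unique peeling of an outermost layer $E$, $\eta s$, $1/E$, $1/(\eta s)$ (which indeed works by evaluating at $s=0$ or $s=\infty$, using the vanishing constraints of Proposition \ref{Prop:types}) is exactly the paper's unique-factorization-when-one-operator-is-a-monomial; your pole swap at an offending node is exactly the paper's root exchange. What your organization buys is a more transparent \emph{localization} of the failure of global identifiability: any node of the merged series--parallel tree with two composite children exhibits it (your combinatorial lemma replaces the paper's corollary about the final join). What it costs is that the pole swap alone does not produce a second preimage; you must still re-solve the numerators from the linear system $P_1'Q_2'+P_2'Q_1'=g$ (unique solution by local identifiability of the sub-join, i.e.\ invertibility of the paper's matrix $(G\ \ H)$) and pull the swapped coefficient pairs back to actual parameters via dominance of the children's coefficient maps --- which is precisely the content of the paper's unnumbered proposition on global identifiability of joins, so you would essentially reprove it inside your ``main obstacle'' step.

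One concrete slip to fix: the duality you invoke, $G\leftrightarrow J$, does \emph{not} preserve the class of spring-dashpot networks --- the ``dual'' of a dashpot would be an element with stiffness $1/(\eta s)$, which no spring or dashpot realizes. The correct involution is $G^*(s)=1/G(1/s)$, which sends a spring $E$ to a spring $1/E$, a dashpot $\eta$ to a dashpot $1/\eta$, interchanges series and parallel, fixes types A and B, and swaps types C and D (consistently with Table \ref{thetable}); alternatively, drop the duality and argue the series and parallel cases side by side, as the paper does with its ``mutatis mutandis.'' With that correction, and with the numerator re-solve made explicit in the necessity direction, your plan goes through.
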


Note that the network given in Fig.~\ref{Fig:turski}(A) and considered in Example \ref{Ex:turski} was deemed locally structurally identifiable. We note that it can be constructed by adding just one element at a time and therefore it is \emph{globally} structurally identifiable. Similarly, all the simple models shown in Fig.~\ref{Fig:simple} can also by constructed adding only one element at a time, and since they are locally identifiable, we conclude that they are also globally structurally identifiable. Now consider a model which is locally, but not globally, structurally identifiable. 

\begin{example}[Local but not global identifiability]\upshape
Consider a generalized Kelvin-Voigt model shown  Fig.~\ref{Fig:turski}(C) and used in \cite{BugCowBea10, MahOttOlu12}) in the context of cardiovascular modeling. It can be symbolically represented by 
\[
M=E_0\land(E_1\lor\eta_1)\land(E_2\lor\eta_2)\land(E_3\lor\eta_3).
\]
Thus the local identifiability can be checked by computing
\[
\cF(M)=A\odot (A\oplus B)\odot (A\oplus B)\odot (A\oplus B)=A\odot C\odot C\odot C = A.
\]
We immediately conclude that the network is locally identifiable. In order to verify whether it is also globally identifiable, note that this network \emph{cannot} be constructed by  adding only one element at a time.  Thus the system is only locally, but not globally, identifiable.  However, in this case the non-global identifiability
arises from merely permuting the parameters among the three Voigt elements.
\end{example}


\section*{Analysis}

In this section, we prove the main results from the previous section.
To do this requires a careful analysis of the structure of the
constitutive equation after combining a pair of systems in series or
in parallel.

Let $N_{1}$ and $N_{2}$ be spring-dashpot models whose respective constitutive equations
are $L_1\epsilon=L_2\sigma$ and $L_3\epsilon=L_4\sigma$, where $L_i$ represent linear differential operators.  We can write the differential operators (in general form) as:

\begin{equation} \label{eqn:ldefns}
\begin{aligned}
L_1&=a_{n_1}d^{n_1}/dt^{n_1}+...+a_{m_1}d^{m_1}/dt^{m_1}\\
L_2&=b_{n_2}d^{n_2}/dt^{n_2}+...+b_{m_2}d^{m_2}/dt^{m_2}\\
L_3&=c_{n_3}d^{n_3}/dt^{n_3}+...+c_{m_3}d^{m_3}/dt^{m_3}\\
L_4&=e_{n_4}d^{n_4}/dt^{n_4}+...+e_{m_4}d^{m_4}/dt^{m_4}\\
\end{aligned}
\end{equation}

\begin{rmk} 
Table \ref{shapestable} shows that there are restrictions on the values of the $n_i$ and $m_i$, e.g.~the differential order of the lowest order term in $\sigma$ is always zero and the differential order of the lowest order term in $\epsilon$ is zero or one, but we leave the operators in general form for simplicity.
\end{rmk}

We now show the form of the resulting constitutive equation after combining these systems  in series or in parallel, in terms of these differential operators.  In what follows, we will treat the differential operators $L_i$ as polynomial functions in the variable $d/dt$.  For example, $L_1$ can be thought of as a polynomial $a_{n_1}x^{n_1}+...+a_{m_1}x^{m_1}$.   

\subsubsection*{Series connection}

Suppose that $M = N_{1} \land N_{2}$ is a series connection of models $N_{1}$ and $N_{2}$,
whose constitutive equations are 
 $L_1\epsilon_1=L_2\sigma_1$ and $L_3\epsilon_2=L_4\sigma_2$, respectively.  Then the stresses ($\sigma$) are the same for the two systems while the strains ($\epsilon$) are added.  If $L_1$ and $L_3$ are relatively prime, then the constitutive equation of $M$ is:
\begin{equation} \label{eq:series}
L_1L_3\epsilon=(L_1L_4+L_2L_3)\sigma, \quad \epsilon=\epsilon_1+\epsilon_2,\quad \sigma=\sigma_1=\sigma_2.
\end{equation}
 We assume that $a_{n_1}=c_{n_3}=1$, so that the constitutive equation is monic.  If $L_1$ and $L_3$ have a common factor, then the constitutive equation of $M$ is obtained by dividing
 (\ref{eq:series})  by the greatest common divisor of $L_1$ and $L_3$. 

\subsubsection*{Parallel connection}

Suppose that $M = N_{1} \lor N_{2}$ is a parallel connection of models $N_{1}$ and $N_{2}$,
whose constitutive equations are 
 $L_1\epsilon_1=L_2\sigma_1$ and $L_3\epsilon_2=L_4\sigma_2$, respectively.  Then the strains ($\epsilon$) are the same for the two systems while the stresses ($\sigma$) are added.  If $L_2$ and $L_4$ are relatively prime, then the constitutive equation is:
\begin{equation}\label{eq:parallel}
(L_1L_4+L_2L_3)\epsilon=L_2L_4\sigma,\quad \epsilon=\epsilon_1=\epsilon_2,\quad \sigma=\sigma_1+\sigma_2.
\end{equation}
We assume that $b_{m_2}=e_{m_4}=1$, so that the constitutive equation is monic.  If $L_2$ and $L_4$ have a common factor, then the constitutive equation is obtained by dividing
 (\ref{eq:parallel})  by the greatest common divisor of $L_2$ and $L_4$.

\subsubsection*{Types of networks}

Now we prove Proposition \ref{Prop:types}, that is, we show that every spring-dashpot network, given by equation \eqref{eq:ODE}, has one of the four possible types displayed in Table \ref{shapestable}, which are defined by the \textit{shapes} of the linear operators $L_i$ acting on $\epsilon$ and $\sigma$.  We make this notion precise:

\begin{defn} 
The \textit{shape} of a linear operator $L_i$ is a pair of numbers, written $[n_i,m_i]$, where $n_i$ is the highest differential order and $m_i$ is the lowest different order.
\end{defn} 

\begin{table}
\caption{
\bf{Possible types of constitutive equations}}
\begin{center}
    \begin{tabular}{ | p{1.5cm} | p{2.3cm} | p{2.3cm} | p{3cm} | p{3cm} |}
    \hline
   Type & Shape in $\epsilon$  	& Shape in $\sigma$ 		\\ \hline
    	A  & [$n,0$] 			& [$n,0$]  					\\ \hline
    	B  & [$n+1,1$] 			& [$n,0$] 				\\ \hline
 	C  & [$n+1,0$] 			& [$n,0$] 				\\ \hline
	D  & [$n,1$] 			& [$n,0$] 					\\ \hline
    \end{tabular}
\end{center}
\begin{flushleft}
The four possible types of constitutive equations, defined by the shapes of the linear operators acting on $\epsilon$ and $\sigma$, written in brackets.
\end{flushleft}
\label{shapestable}
\end{table}

We note that a spring is of type A and a dashpot is of type B.  A Voigt element is formed by a parallel extension of types A and B, which forms type C, and a Maxwell element is formed by a series extension of types A and B, which forms type D.  The properties of these four types are displayed in Table \ref{shapestable}.  We can now form the $10$ possible combinations of pairing two of these types in series and the $10$ possible combinations of pairing two of these types in parallel.  In Tables \ref{seriestable} and \ref{paralleltable}, we show the $20$ total possibilities and demonstrate that each pairing results in a type A, B, C, or D.  Since every spring-dashpot network can be written as a combination, in series or in parallel, of springs and dashpots, then we have shown by induction that joining any two spring-dashpot networks in series or in parallel results in one of these four types.

\begin{table}
\caption{
\bf{Series connection}}
\begin{center}
    \begin{tabular}{ | p{1cm} | p{2.5cm} | p{2.5cm} | p{2cm} | p{2cm} | p{2cm} | p{1cm} |}
    \hline
Type & Shape in $\epsilon$  & Shape in $\sigma$  & Non-monic coefficients & Parameters & Identifiable?  & Type  \\ \hline
    (A,A)  & [$n_1+n_2,0$] & [$n_1+n_2,0$] & $2n_1+2n_2+1$ & $2n_1+2n_2+2$ & Not Id & A  \\ \hline
		(A,B)  & [$n_1+n_2+1,1$] & [$n_1+n_2+1,0$] & $2n_1+2n_2+2$ & $2n_1+2n_2+2$ & Id & D  \\ \hline
		(A,C)  & [$n_1+n_2+1,0$] & [$n_1+n_2+1,0$] & $2n_1+2n_2+3$ & $2n_1+2n_2+3$ & Id & A  \\ \hline
		(A,D)  & [$n_1+n_2,1$] & [$n_1+n_2,0$] & $2n_1+2n_2$ & $2n_1+2n_2+1$ & Not Id & D  \\ \hline
		(B,B)  & [$n_1+n_2+1,1$] & [$n_1+n_2,0$] & $2n_1+2n_2+1$ & $2n_1+2n_2+2$ & Not Id & B  \\ \hline
		(B,C)  & [$n_1+n_2+2,1$] & [$n_1+n_2+1,0$] & $2n_1+2n_2+3$ & $2n_1+2n_2+3$ & Id & B  \\ \hline
		(B,D)  & [$n_1+n_2,1$] & [$n_1+n_2,0$] & $2n_1+2n_2$ & $2n_1+2n_2+1$ & Not Id & D  \\ \hline
		(C,C)  & [$n_1+n_2+2,0$] & [$n_1+n_2+1,0$] & $2n_1+2n_2+4$ & $2n_1+2n_2+4$ & Id & C  \\ \hline
		(C,D)  & [$n_1+n_2+1,1$] & [$n_1+n_2+1,0$] & $2n_1+2n_2+2$ & $2n_1+2n_2+2$ & Id & D  \\ \hline
		(D,D)  & [$n_1+n_2-1,1$] & [$n_1+n_2-1,0$] & $2n_1+2n_2-2$ & $2n_1+2n_2$ & Not Id & D  \\ \hline
    \end{tabular}
\end{center}
\begin{flushleft}
Two systems of types A, B, C, or D are combined in series, where in the first system $n=n_1$ and in the second system $n=n_2$.
\end{flushleft}
\label{seriestable}
\end{table}

\begin{table}
\caption{
\bf{Parallel connection}}
\begin{center}
    \begin{tabular}{ | p{1cm} | p{2.5cm} | p{2.5cm} | p{2cm} | p{2cm} | p{2cm} | p{1cm} |}
    \hline
 Type & Shape in $\epsilon$  & Shape in $\sigma$  & Non-monic coefficients & Parameters & Identifiable?  & Type  \\ \hline
    (A,A)  & [$n_1+n_2,0$] & [$n_1+n_2,0$] & $2n_1+2n_2+1$ & $2n_1+2n_2+2$ & Not Id & A  \\ \hline
		(A,B)  & [$n_1+n_2+1,0$] & [$n_1+n_2,0$] & $2n_1+2n_2+2$ & $2n_1+2n_2+2$ & Id & C  \\ \hline
		(A,C)  & [$n_1+n_2+1,0$] & [$n_1+n_2,0$] & $2n_1+2n_2+2$ & $2n_1+2n_2+3$ & Not Id & C  \\ \hline
		(A,D)  & [$n_1+n_2,0$] & [$n_1+n_2,0$] & $2n_1+2n_2+1$ & $2n_1+2n_2+1$ & Id & A  \\ \hline
		(B,B)  & [$n_1+n_2+1,1$] & [$n_1+n_2,0$] & $2n_1+2n_2+1$ & $2n_1+2n_2+2$ & Not Id & B  \\ \hline
		(B,C)  & [$n_1+n_2+1,0$] & [$n_1+n_2,0$] & $2n_1+2n_2+2$ & $2n_1+2n_2+3$ & Not Id & C  \\ \hline
		(B,D)  & [$n_1+n_2+1,1$] & [$n_1+n_2,0$] & $2n_1+2n_2+1$ & $2n_1+2n_2+1$ & Id & B  \\ \hline
		(C,C)  & [$n_1+n_2+1,0$] & [$n_1+n_2,0$] & $2n_1+2n_2+2$ & $2n_1+2n_2+4$ & Not Id & C  \\ \hline
		(C,D)  & [$n_1+n_2+1,0$] & [$n_1+n_2,0$] & $2n_1+2n_2+2$ & $2n_1+2n_2+2$ & Id & C  \\ \hline
		(D,D)  & [$n_1+n_2,1$] & [$n_1+n_2,0$] & $2n_1+2n_2$ & $2n_1+2n_2$ & Id & D  \\ \hline
    \end{tabular}
\end{center}
\begin{flushleft}
Two systems of types A, B, C, or D are combined in parallel, where in the first system $n=n_1$ and in the second system $n=n_2$.
\end{flushleft}
\label{paralleltable}
\end{table}

\begin{rmk} We note that if a type B or D is combined in series with a type B or D, then $L_1$ and $L_3$ have a common factor (since both lacked a constant term), so the equation $L_1L_3\epsilon=(L_1L_4+L_2L_3)\sigma$ is divided by $\gcd(L_1,L_3) = d/dt$ to arrive at the
shapes listed in the table.  
\end{rmk}

In addition to the type of equation that results after combining two equations of types $\left\{A,B,C,D\right\}$, we have in Tables \ref{seriestable} and \ref{paralleltable} the resulting identifiability properties of each equation, which we will obtain in the next section.  Note that Definition \ref{defn:id} implies that if there are more parameters than non-monic coefficients, then the system must be unidentifiable.  The tables show that the number of non-monic coefficients is bounded by the number of parameters, thus a necessary condition for identifiability is that the number of parameters equals the number of non-monic coefficients in the constitutive equation \eqref{eq:ODE}.  In the next section, we show that this is also a sufficient condition.

\subsubsection*{Local identifiability}

Consider a  spring-dashpot system $M$
whose final step connection is a series connection of two systems
$N_{1}$ and $N_{2}$, i.e.~$M = N_{1} \land N_{2}$.
Since the number of non-monic coefficients in any spring-dashpot
model is always less than or equal to the number of parameters in
that model, we know that a necessary condition for this system
to be locally identifiable is that $N_{1}$ and $N_{2}$ are both
locally identifiable. Let $L_1\epsilon_1=L_2\sigma_1$ be the constitutive equation for $N_{1}$
and $L_3\epsilon_2=L_4\sigma_2$ be the constitutive equation for $N_{2}$.
Each of the operators $L_{1}$, $L_{2}$, $L_{3}$, and $L_{4}$ will
have a fixed shape determined by the structure of $N_{1}$ and $N_{2}$. 
Assuming that $N_{1}$ and $N_{2}$ are locally identifiable,
we can choose parameters in each of the models $N_{1}$ and $N_{2}$
so that the coefficients of these constitutive equations are arbitrary
numbers.  Thus, deciding identifiability of this system amounts
to determining whether the map that takes the pair of equations 
$(L_1\epsilon_1=L_2\sigma_1, L_3\epsilon_2=L_4\sigma_2)$ to the constitutive
equation $f \epsilon = g \sigma$, where $f=L_1L_3$,  $g=L_1L_4+L_2L_3$, $\epsilon=\epsilon_1+\epsilon_2$, and $\sigma=\sigma_1=\sigma_2$ (cf. \eqref{eq:series}), for the system $M$ is finite-to-one or not.

 The same reasoning works {\it mutatis mutandis} for
parallel connections, where we now concern ourselves with the map from 
the pair of equations $(L_1\epsilon_1=L_2\sigma_1, L_3\epsilon_2=L_4\sigma_2)$
with generic coefficients to the constitutive equation 
for $M = N_{1} \lor  N_{2}$ given in \eqref{eq:parallel}.

To make the above, intuitive, statements precise we introduce the following definition.

\begin{defn}
The \emph{shape factorization problem} for a quadruple of shapes 
\[
Q=([n_{1}, m_{1}], [n_{2}, m_{2}], [n_{3}, m_{3}], [n_{4}, m_{4}])
\] 
is the following problem:  for a generic pair of 
polynomials $(f,g)$ with $f$ monic such that the 
${\rm shape}(f) = [n_{1} + n_{3}, m_{1} + m_{3}]$
and  ${\rm shape}(g) = [\max(n_{1} + n_{4}, n_{2} + n_{3}), 
\min( m_{1} + m_{4}, m_{2} + m_{3}) ]$,
do there exist finitely many quadruples of polynomials $(L_{1}, L_{2}, L_{3}, L_{4})$
with ${\rm shape}(L_{i}) = [n_{i}, m_{i}]$, $L_{1}$ and $L_{3}$ are monic,  
and such that 
$f = L_{1}L_{3}$ and $g = L_{1} L_{4} + L_{2} L_{3}$?
A quadruple of shapes $Q$ is said to be \emph{good} if
the shape factorization problem for that quadruple has a positive solution.
\end{defn}

Since the above definition introduces one of the key concepts of the paper,  in the following example we shall further  illustrate the meaning of the shape factorization problem.

\begin{ex}
Suppose that our quadruple 
\[
([n_{1}, m_{1}], [n_{2}, m_{2}], [n_{3}, m_{3}], [n_{4}, m_{4}]) =  ([2,0], [2,0], [3,0],[2,0])
\] which is a special case
of joining models of types $A$ and $C$ in series.  The shape factorization
problem in this case asks the following question:

Let $(f,g)$ be a generic pair of polynomials  where $f$ and $g$ are degree $5$ polynomials
with nonzero constant term and $f$ is monic:
\[
\begin{aligned}
&f = x^{5} + f_{4}x^{4} + f_{3}x^{3} + f_{2}x^{2} + f_{1}x + f_{0}\\
&g = g_{5}x^{5} + g_{4}x^{4} + g_{3}x^{3} + g_{2}x^{2} + g_{1}x + g_{0}. 
\end{aligned}
\]
Do there exist finitely many polynomials
\[
\begin{aligned}
&L_{1} = x^{2} + a_{1}x + a_{0},  			&\quad& L_{2} = b_{2}x^{2} + b_{1}x + b_{0}\\
&L_{3} = x^{3}+ c_{2} x^{2} + c_{1}x + c_{0},  	&& L_{4} = d_{2}x^{2} + d_{1}x + d_{0}
\end{aligned}
\]
such that $f = L_{1}L_{3}$ and $g = L_{1}L_{4} + L_{2}L_{3}$?
Or to say it another way, for generic values of $f_{4}, \ldots, f_{0}$ and $g_{5}, \ldots, g_{0}$,
does the system of $11$ equations in $11$ unknowns:
\begin{eqnarray*}
f_{4} & = & a_{1} + c_{2}  \\
f_{3} & = &  a_{0} + a_{1}c_{2} + c_{1}  \\
f_{2} & = &  a_{0}c_{2} + a_{1}c_{1} + c_{0}  \\
f_{1} & = &  a_{0}c_{1} + a_{1} c_{0} \\
f_{0} & = & a_{0}c_{0}  \\
g_{5}  & = & b_{2}  \\
g_{4}  & = & b_{1} + b_{2} c_{2} + d_{2} \\
g_{3}  & = & b_{0} + b_{1}c_{2} + b_{2}c_{1} + a_{1}d_{2} + d_{1} \\
g_{2}  & = &  b_{0}c_{2} + b_{1}c_{1} + b_{2}c_{0} + a_{0}d_{2} + a_{1}d_{1} + d_{0}  \\
g_{1}  & = &  b_{0}c_{1} + b_{1}c_{0} + a_{0}d_{1} +a_{1}d_{0}  \\
g_{0}  & = &  b_{0}c_{0} + a_{0}d_{0}
\end{eqnarray*}
have only finitely many solutions?
\end{ex}

The language of shape factorization problems and the remarks in the preceding
paragraphs allow us to reduce the local identifiability  problem for a spring-dashpot system to determining whether a
certain quadruple is a good quadruple.

\begin{prop}
Let $M = N_{1} \land N_{2}$ be a spring-dashpot model joined in series
from $N_{1}$ and $N_{2}$, where $N_{1}$ has constitutive equation
$ L_{1} \epsilon_1 = L_{2} \sigma_1$ of shapes $[n_{1}, m_{1}]$ and $[n_{2}, m_{2}]$,
respectively,  and   $N_{2}$ has constitutive equation
$ L_{3} \epsilon_2 = L_{4} \sigma_2$ of shapes $[n_{3}, m_{3}]$ and $[n_{4}, m_{4}]$,
respectively.  Then the model $M$ is locally identifiable if and
only if
\begin{enumerate}
\item  $N_{1}$ and $N_{2}$ are locally identifiable, and
\item  $([n_{1}, m_{1}], [n_{2}, m_{2}], 
[n_{3}, m_{3}], [n_{4}, m_{4}])$ is a good quadruple.
\end{enumerate}
Similarly, if $M = N_{1} \lor N_{2}$ is a spring-dashpot model joined in parallel
from $N_{1}$ and $N_{2}$, then $M$ is locally identifiable if and only if
\begin{enumerate}
\item  $N_{1}$ and $N_{2}$ are locally identifiable, and
\item  $([n_{2}, m_{2}], [n_{1}, m_{1}],  
 [n_{4}, m_{4}], [n_{3}, m_{3}])$ is a good quadruple.
\end{enumerate}
\end{prop}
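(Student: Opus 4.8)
The plan is to recognize the coefficient map of the combined model $M$ as a composition and then study finite-to-one-ness factor by factor. Consider the series case $M = N_{1}\land N_{2}$, and let $\mathbf{c}_{1}$ and $\mathbf{c}_{2}$ denote the coefficient maps of $N_{1}$ and $N_{2}$; by construction their images are exactly the coefficient vectors of the operator pairs $(L_{1},L_{2})$ and $(L_{3},L_{4})$, normalized so that $L_{1},L_{3}$ are monic. By \eqref{eq:series} the normalized coefficients of $M$ depend on the underlying parameters only through these operator coefficients, via the \emph{shape factorization map}
\[
\Phi:\ (L_{1},L_{2},L_{3},L_{4})\ \longmapsto\ (f,g)=\bigl(L_{1}L_{3},\ L_{1}L_{4}+L_{2}L_{3}\bigr),
\]
where one divides out $\gcd(L_{1},L_{3})$ and renormalizes when $L_{1},L_{3}$ share a factor, as in the Remark following Table \ref{paralleltable}. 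Thus $\mathbf{c}_{M}=\Phi\circ(\mathbf{c}_{1}\times\mathbf{c}_{2})$, and by Definition \ref{defn:id} local identifiability of $M$ is precisely the assertion that this composite is generically finite-to-one.

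First I would show that condition (1) is necessary. If, say, $N_{1}$ were not locally identifiable, then the generic fibers of $\mathbf{c}_{1}$ are positive-dimensional; since $f$ and $g$ see the parameters of $N_{1}$ only through the coefficients of $L_{1},L_{2}$, an entire positive-dimensional family of parameters of $N_{1}$ produces the same constitutive equation for $M$, so $\mathbf{c}_{M}$ is infinite-to-one and $M$ is unidentifiable. Assuming now that (1) holds, I would next establish that the image of $\mathbf{c}_{1}\times\mathbf{c}_{2}$ is dense in the full space of quadruples $(L_{1},L_{2},L_{3},L_{4})$ of the prescribed shapes. Indeed, local identifiability makes each $\mathbf{c}_{i}$ finite-to-one, so the image of $\mathbf{c}_{i}$ has dimension equal to the number of parameters of $N_{i}$; since the number of non-monic coefficients never exceeds the number of parameters (the bound recorded in Tables \ref{seriestable}--\ref{paralleltable}), this forces equality, hence a full-dimensional, dense image. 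Consequently the coefficients of $(L_{1},\dots,L_{4})$ sweep out a dense subset, so a generic pair $(f,g)$ in the image of $\mathbf{c}_{M}$ is a generic pair of polynomials of exactly the shapes that appear in the shape factorization problem.

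With these two facts in hand I would conclude via the standard principle that a composition of generically finite-to-one maps is generically finite-to-one, together with its partial converse: when the first map is generically finite-to-one with dense image, the composite is generically finite-to-one if and only if the second map is. Under (1), $\mathbf{c}_{1}\times\mathbf{c}_{2}$ is generically finite-to-one with dense image, so $\mathbf{c}_{M}$ is generically finite-to-one if and only if $\Phi$ is generically finite-to-one over a dense set of quadruples; by the density established above, this is exactly the statement that the quadruple $([n_{1},m_{1}],[n_{2},m_{2}],[n_{3},m_{3}],[n_{4},m_{4}])$ is good. This is the technical heart of the argument and the main obstacle: in the forward direction one must verify that the finitely many $\Phi$-preimages of a generic $(f,g)$ avoid the locus where $\mathbf{c}_{1}\times\mathbf{c}_{2}$ has positive-dimensional fibers, and in the converse direction that a positive-dimensional $\Phi$-fiber meets the dense intermediate image in a positive-dimensional set. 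Both follow from a dimension count using that goodness forces $\dim(\text{domain of }\Phi)=\dim(\text{target})$, so that the exceptional loci are proper and hence generically disjoint from the relevant fibers; care is also needed to handle uniformly the $\gcd$-cancellation cases, where the shapes of $f$ and $g$ are governed by the $\max$/$\min$ formulas in the definition of the shape factorization problem.

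Finally, the parallel case follows from the series case by the symmetry of \eqref{eq:parallel} that interchanges the roles of $\epsilon$ and $\sigma$. There the monic side is the product $L_{2}L_{4}$ while the cross term $L_{1}L_{4}+L_{2}L_{3}$ is unchanged, so relabeling $(L_{2},L_{4})$ as the monic factors and $(L_{1},L_{3})$ as the remaining operators recasts the problem in exactly the form $f=L_{1}'L_{3}'$, $g=L_{1}'L_{4}'+L_{2}'L_{3}'$ treated above. This permutation of the operators is precisely what produces the reordered quadruple $([n_{2},m_{2}],[n_{1},m_{1}],[n_{4},m_{4}],[n_{3},m_{3}])$ in the statement, and the identical composition argument then yields the claim.
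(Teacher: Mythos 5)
Your proposal is correct and takes essentially the same route as the paper: the paper justifies this proposition informally in the paragraphs immediately preceding it, arguing exactly as you do that the coefficient/parameter count forces local identifiability of $N_1$ and $N_2$, that identifiability of the subsystems lets the operator coefficients of $(L_1,L_2,L_3,L_4)$ be treated as generic, and that identifiability of $M$ then reduces to the shape factorization (good quadruple) question, with the parallel case handled \emph{mutatis mutandis} by swapping the roles of $\epsilon$ and $\sigma$. Your write-up merely makes this composition argument explicit, supplying the density and dimension-count details that the paper leaves implicit.
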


So what remains to show is that, for the shapes that arise 
in spring-dashpot models, whether a quadruple of shapes is
a good quadruple only depends on the types ($A, B, C, $ or $D$) of
the systems being combined.  The proof of this statement
will occupy the rest of this section.

Let $f$ and $g$ be two polynomials.  Note that for given fixed shapes, 
$[n_{1}, m_{1}]$ and $[n_{3}, m_{3}]$, there are at most finitely
many factorizations $f = L_{1}L_{3}$, where $L_{1}$ has shape 
$[n_{1}, m_{1}]$ and $L_{3}$ has shape $[n_{3}, m_{3}]$ and both are monic.  
This is
because there are at most finitely many ways to factorize a monic polynomial into
monic factors.
Once we fix one of these finitely many choices for $L_{1}$ and
$L_{3}$, the equation $ g = L_{1} L_{4} + L_{2} L_{3}$ is
a linear system in the (unknown) coefficients of $L_{2}$ and $L_{4}$.

For a polynomial $f = f_{n}x^{n} + \cdots + f_{m}x^{m}$ of shape $[n,m]$, we can write
the coefficients of $f$ as a vector, which we denote 
\[
[f] := \begin{pmatrix}
f_{n} \\
\vdots \\
f_{m} \end{pmatrix}.
\]

Let $L_i$ have shape $[n_i,m_i]$, as defined in Equation \eqref{eqn:ldefns}. The vector of coefficients of $L_1L_4$ can be written 
as the result of a matrix vector product as:
\[[L_1L_4]=
\begin{pmatrix}
a_{n_1} & 0 & \cdots & 0 \\
\vdots & a_{n_1} & \cdots & 0 \\
a_{m_1} & \vdots & \cdots & \vdots \\ 
0 & a_{m_1} & \cdots & 0 \\
\vdots & 0 & \cdots & a_{n_1} \\
\vdots & \vdots & \cdots & \vdots \\
0 & 0 & \cdots & a_{m_1}  
\end{pmatrix}
\begin{pmatrix}
e_{n_4} \\
\vdots \\
e_{m_4}
\end{pmatrix}.
\]
We will refer to this product as $H'[L_4]$, where $H'$ is a $n_1+n_4-m_1-m_4+1$ by $n_4-m_4+1$ matrix.  
Likewise, the coefficients of $L_2L_3$ can be written as the result of a matrix vector product as:
\[
[L_2L_3]=
\begin{pmatrix}
c_{n_3} & 0 & \cdots & 0 \\
\vdots & c_{n_3} & \cdots & 0 \\
c_{m_3} & \vdots & \cdots & \vdots \\ 
0 & c_{m_3} & \cdots & 0 \\
\vdots & 0 & \cdots & c_{n_3} \\
\vdots & \vdots & \cdots & \vdots \\
0 & 0 & \cdots & c_{m_3}  
\end{pmatrix}
\begin{pmatrix}
b_{n_2} \\
\vdots \\
b_{m_2}
\end{pmatrix}.
\]
We will refer to this product as $G'[L_2]$, where $G'$ is a $n_2+n_3-m_2-m_3+1$ by $n_2-m_2+1$ matrix. Then we call the \textit{matrix factored form} of $[L_1L_4+L_2L_3]$ the expression:
\begin{equation} \label{eqn:factored}
G[L_2]+H[L_4],
\end{equation}
where the matrices $G$ and $H$ are the matrices $G'$ and $H'$ padded with rows of zeros so
that coefficients corresponding to monomials of the
same degree appear in the same row.   This makes  $(G \ \ H)$  a $\max\left\{n_1+n_4,n_2+n_3\right\}-\min\left\{m_1+m_4,m_2+m_3\right\}+1$ by $n_2-m_2+n_4-m_4+2$ matrix. 

We can now state a criteria for determining if the shape factorization problem has  finitely many solutions:

\begin{prop} \label{prop:abinvert}
The quadruple $([n_{1}, m_{1}], [n_{2}, m_{2}], 
[n_{3}, m_{3}], [n_{4}, m_{4}])$ is a good quadruple if and only
if the matrix $(G \ \ H)$ is generically invertible.  
\end{prop}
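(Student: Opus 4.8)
The plan is to cash in the reduction set up in the paragraphs preceding the statement, which converts the shape factorization problem into a purely linear-algebraic question once the factorization of $f$ is fixed. For a generic monic $f$ of shape $[n_1+n_3, m_1+m_3]$ there are only finitely many ways to write $f = L_1L_3$ with $L_1,L_3$ monic of the prescribed shapes, since this amounts to partitioning the roots of $f$ into two groups. I would fix one such factorization; then the remaining equation $g = L_1L_4 + L_2L_3$ is linear in the unknown coefficients of $L_2$ and $L_4$, and by the matrix-factored form \eqref{eqn:factored} it is exactly the system $[g] = (G \; H)\,\binom{[L_2]}{[L_4]}$. Hence the total number of solutions of the shape factorization problem is the sum, over the finitely many factorizations $(L_1,L_3)$, of the number of solutions $(L_2,L_4)$ of this linear system. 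The governing dichotomy is the standard one: a \emph{consistent} linear system has a unique solution when its coefficient matrix is injective, and infinitely many solutions when the matrix has a nontrivial kernel.

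For the direction assuming $(G \; H)$ is generically invertible the argument is short. Invertibility forces the matrix to be square, and ``generically'' means its determinant --- a fixed polynomial in the coefficients of $L_1$ and $L_3$ --- does not vanish identically; a generic $f$ then yields factors $L_1,L_3$ whose coefficients avoid this determinantal hypersurface, simultaneously for each of the finitely many factorizations. So each factorization contributes exactly one pair $(L_2,L_4)$, and summing over the finitely many factorizations shows the shape factorization problem has finitely many solutions, i.e.\ $Q$ is good.

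The converse is where the real work lies, and I would prove it by contraposition. First I invoke the a priori bound that the number of non-monic coefficients never exceeds the number of parameters (read off case-by-case from Tables \ref{seriestable} and \ref{paralleltable}); through the dimension counts this says precisely that $(G \; H)$ has at least as many columns as rows, so it is square or wide and never tall. Suppose $(G \; H)$ is not generically invertible. Then either it is wide, hence automatically has a nontrivial kernel, or it is square with identically vanishing determinant; in both cases the map $(L_2,L_4)\mapsto (G \; H)\,\binom{[L_2]}{[L_4]}$ fails to be injective for generic $f$. The one remaining gap --- and the main obstacle of the proof --- is that non-injectivity produces infinitely many solutions only when the system is \emph{consistent}, whereas a singular system could instead be inconsistent and contribute zero solutions. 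I would close this gap by noting that the admissible pairs $(f,g)$ are exactly those realized by the combined model, i.e.\ those lying in the image of $(L_1,L_2,L_3,L_4)\mapsto(L_1L_3,\,L_1L_4+L_2L_3)$, so by construction the system is consistent for generic data; concretely, a generic $f$ factors into coprime $L_1,L_3$ (after dividing out any shared lowest-order factor $d/dt$, exactly as in the Remark following Table \ref{paralleltable}), whence $L_1L_4 + L_2L_3$ attains every polynomial of the prescribed shape of $g$ and $(G \; H)$ has full row rank. With consistency in hand, the nontrivial kernel yields a positive-dimensional affine family of solutions $(L_2,L_4)$ for at least one factorization, so the shape factorization problem has infinitely many solutions and $Q$ is not good.

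Finally I would record the bookkeeping that makes ``generically invertible'' unambiguous: since the entries of $G$ and $H$ are the coefficients of $L_3$ and $L_1$ respectively, $\det(G \; H)$ is a single fixed polynomial in those coefficients, and genericity of $f$ controls all of its factorizations at once. The conclusions of both directions therefore hold uniformly over the finitely many choices of $(L_1,L_3)$, which completes the equivalence.
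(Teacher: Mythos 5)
Your proposal is correct and takes essentially the same approach as the paper: after fixing one of the finitely many monic factorizations $f=L_{1}L_{3}$, the remaining equation $g=L_{1}L_{4}+L_{2}L_{3}$ is read as the linear system $G[L_{2}]+H[L_{4}]=[g]$, whose unique solvability is exactly generic invertibility of $(G \ \ H)$. The paper's own proof stops at that two-line observation; your additional work on the converse --- excluding tall matrices via the parameter count, transferring genericity of $f$ to its factors $(L_{1},L_{3})$, and closing the consistency gap (a non-invertible system must be shown to have infinitely many solutions rather than none) --- fills in details the paper leaves implicit, and is sound.
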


\begin{proof}
We can write the shape factorization problem of type $([n_{1}, m_{1}], [n_{2}, m_{2}], 
[n_{3}, m_{3}], [n_{4}, m_{4}])$ in matrix factored form as $G[L_2]+H[L_4]=[g]$ (see \eqref{eqn:factored}), so that 
\[
\begin{pmatrix}
G & H
\end{pmatrix}
\begin{pmatrix}
L_2 \\
L_4
\end{pmatrix}
= [g].
\]
This system  has a unique solution if and only if $(G \ \ H)$ is generically invertible, 
i.e.~invertible for a generic choice of parameter values.
\end{proof}

\smallskip

\begin{ex}\label{ex:quadruple}
Suppose that our quadruple $([n_{1}, m_{1}], [n_{2}, m_{2}], 
[n_{3}, m_{3}], [n_{4}, m_{4}])$ is
$([2,0], [2,0], [3,0],[2,0])$, which is a special case of joining models of types $A$ and $C$ in series.  The resulting
matrix $(G \ \ H)$ is the matrix
$$
\left(
\begin{array}{cccccc}
0 & 0     & 0     & c_{3} & 0     & 0     \\
a_{2} & 0 & 0     & c_{2} & c_{3} & 0     \\
a_{1} & a_{2} & 0 & c_{1} & c_{2} & c_{3} \\
a_{0}     & a_{1} & a_{2} & c_{0} & c_{1} & c_{2} \\
0     & a_{0}     & a_{1} & 0     & c_{0} & c_{1} \\
0     & 0     & a_{0}     & 0     & 0     & c_{0}
\end{array} \right).
$$

\end{ex}

\smallskip

 We now determine when this matrix $(G \ \ H)$ is generically invertible, i.e.~square and full rank.  The \textit{Sylvester matrix} associated to two polynomials $p(z)=p_{0}+p_{1}z+p_{2}z^2+...+p_{m}z^m$ and $q(z)=q_{0}+q_{1}z+q_{2}z^2+...+q_{n}z^n$ is the $n+m$ by $n+m$ matrix that has the coefficients of $p(z)$ repeated $n$ times as columns and the coefficients of $q(z)$ repeated $m$ times as columns in the following way:

\[
\begin{pmatrix}
p_{m} & 0 & \cdots & 0 & q_{n} & 0 & \cdots & 0\\
\vdots & p_{m} & \cdots & 0 & \vdots & q_{n} & \cdots & 0\\
p_{0} & \vdots & \cdots & \vdots & q_{0} & \vdots & \cdots & \vdots\\ 
0 & p_{0} & \cdots & 0 & 0 & q_{0} & \cdots & 0 \\
\vdots & 0 & \cdots & p_{m} & \vdots & 0 & \cdots & q_{n} \\
\vdots & \vdots & \cdots & \vdots & \vdots & \vdots & \cdots & \vdots\\
0 & 0 & \cdots & p_{0} & 0 & 0 & \cdots & q_{0}
\end{pmatrix}.
\]

$$ \underbrace{\ \ \ \ \ \ \ \ \ \ \ \ \ \ \ \ \ \ \ \ }_{n} \underbrace{\ \ \ \ \ \ \ \ \ \ \ \ \ \ \ \ \ \ \ }_{m} $$

The determinant of the Sylvester matrix of the two polynomials $p$ and $q$ is the \emph{resultant}, which is zero if and only if the two polynomials have a common root.  In particular, for generic polynomials $p$ and $q$, the Sylvester matrix is invertible \cite[Chapter 3]{Cox2005}.

We will use the Sylvester matrix in the following way.  We will show that there are submatrices of $(G \ \ H)$ that correspond to the Sylvester matrix associated to $L_1$ and $L_3$. 

\begin{prop} \label{prop:squarefullrank} If the matrix $(G \ \ H)$ is square, then it is generically invertible.  
\end{prop}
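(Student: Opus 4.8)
The plan is to prove that $\det(G\ \ H)$, viewed as a polynomial in the coefficients of $L_1$ and $L_3$, does not vanish identically; since $(G\ \ H)$ is assumed square, generic invertibility is exactly the statement that this determinant is not the zero polynomial. The first observation I would record is that the only parameters appearing as entries of $(G\ \ H)$ are the coefficients of $L_1$ and $L_3$: the block $H$ is assembled from the coefficients of $L_1$ and $G$ from those of $L_3$, while the coefficients of $L_2$ and $L_4$ are the unknowns of the linear system and never enter the matrix. Concretely, the columns of $(G\ \ H)$ are successive shifts of the coefficient vector $[L_1]=(a_{n_1},\dots,a_{m_1})^{T}$ (there are $n_4-m_4+1$ of these, from $H$) together with successive shifts of $[L_3]=(c_{n_3},\dots,c_{m_3})^{T}$ (there are $n_2-m_2+1$ of these, from $G$). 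Writing $L_1=x^{m_1}\bar L_1$ and $L_3=x^{m_3}\bar L_3$, where $\bar L_1,\bar L_3$ have nonzero constant term and degrees $d_1=n_1-m_1$ and $d_3=n_3-m_3$, these shifted vectors coincide with the shifts of $[\bar L_1]$ and $[\bar L_3]$ that build the Sylvester matrix of $\bar L_1$ and $\bar L_3$.

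Next I would isolate the \emph{overhang} of the matrix. Its rows run over degrees $\max(n_1+n_4,\,n_2+n_3)$ down to $\min(m_1+m_4,\,m_2+m_3)$, whereas the $H$-columns occupy only the degrees $n_1+n_4$ down to $m_1+m_4$ and the $G$-columns only $n_2+n_3$ down to $m_2+m_3$. Whenever one of these ranges strictly exceeds the other at the top (or at the bottom), the corresponding extreme rows contain a single nonzero entry, namely a leading coefficient $a_{n_1}$ or $c_{n_3}$ (respectively a trailing coefficient $a_{m_1}$ or $c_{m_3}$). Cofactor expansion along each such row peels off one factor equal to that extreme coefficient and deletes one row and one column, exactly as in Example \ref{ex:quadruple}, where expansion along the top row pulled out the factor $c_3$ and left a $5\times5$ Sylvester determinant.

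The heart of the argument is to show that the squareness hypothesis forces this peeling to terminate in a genuine square Sylvester matrix of $\bar L_1$ and $\bar L_3$. Comparing the number of columns, $C=(n_4-m_4+1)+(n_2-m_2+1)$, with the number of rows, $R=\max(n_1+n_4,\,n_2+n_3)-\min(m_1+m_4,\,m_2+m_3)+1$, and imposing $R=C$, a short computation shows that after the overhang rows and columns have been removed there survive exactly $d_3$ shifts of $[\bar L_1]$ and $d_1$ shifts of $[\bar L_3]$, arranged over $d_1+d_3$ consecutive degrees, which is precisely the Sylvester matrix of $\bar L_1$ and $\bar L_3$ recorded in the text. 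Consequently $\det(G\ \ H)=\pm\,(\text{a monomial in the extreme coefficients }a_{n_1},a_{m_1},c_{n_3},c_{m_3})\cdot\operatorname{Res}(\bar L_1,\bar L_3)$. Since $\bar L_1$ and $\bar L_3$ are generic polynomials of fixed degrees with nonzero constant terms, they are generically coprime, so their resultant is generically nonzero, and the extreme coefficients are generically nonzero as well; hence $\det(G\ \ H)\not\equiv 0$, and by Proposition \ref{prop:abinvert} this is exactly what we need.

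The hard part, I expect, will be the bookkeeping of the previous paragraph: one must split into cases according to which of $n_1+n_4$ and $n_2+n_3$ is larger (and likewise for the lower endpoints), and verify in each case both that every peeled extreme row carries a unique nonzero entry and that the surviving block has column counts exactly $d_3$ and $d_1$. The boundary case $n_1+n_4=n_2+n_3$ (and its analogue at the bottom) deserves separate attention, because there the top row legitimately carries two nonzero entries and no peeling takes place; this is harmless, however, since the top row of a Sylvester matrix already carries the two leading coefficients of $\bar L_1$ and $\bar L_3$. Everything else — recognizing the shifted-coefficient structure and invoking the generic non-vanishing of the resultant — is routine given the Sylvester-matrix facts already stated.
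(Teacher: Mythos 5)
Your proposal is correct and takes essentially the same route as the paper: the paper's proof reorders the columns of $(G\ \ H)$ into a block-triangular form whose middle block is the Sylvester matrix of the reduced polynomials (your $\bar L_1$, $\bar L_3$) and whose corner blocks $S'$, $S''$ are triangular with the extreme coefficients on their diagonals---exactly the effect of your iterated cofactor peeling---so the determinant is a monomial in the extreme coefficients times a resultant, which is generically nonzero. The two-case bookkeeping you defer (according to which of $n_1+n_4$, $n_2+n_3$ attains the maximum and which of $m_1+m_4$, $m_2+m_3$ the minimum) is precisely the case analysis the paper carries out, and your column counts $d_3$ and $d_1$ for the surviving Sylvester core do check out under the squareness hypothesis.
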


\begin{proof}
We claim that the columns of $(G \ \ H)$ can be ordered so that the
resulting matrix has the shape
\begin{equation}\label{eq:sylvesterish}
\begin{pmatrix}
S' &  0 & 0 \\
X & S & Y \\
0 & 0 & S''  
\end{pmatrix}
\end{equation}
where $S$ is the Sylvester matrix associated to the nonzero coefficients of 
$L_{1}$ and $L_{3}$.  Note that this means that we might shift the coefficients
down if necessary so there are no extraneous zero terms of low degree (i.e.~if the shape
is $[n_{i},m_{i}]$ with $m_{i} \neq 0$).
The matrix  $S'$ is a square lower triangular
matrix with nonzero entries on the diagonal, and $S''$ is a square
upper triangular matrix with nonzero entries on the diagonal.  
This will
prove that $(G \ \ H)$ is invertible, since its determinant will be the product
of the determinants of $S$, $S'$ and $S''$, all of which are nonzero.
To prove that claim requires a careful case analysis.

The number of columns of $(G \ \ H)$ is
$n_{4} - m_{4} +n_{2} - m_{2} + 2$ and the number of rows is
$\max( n_{1} + n_{4}, n_{2} + n_{3}) - \min(m_{1} + m_{4}, m_{2} + m_{3}) + 1$.
Without loss of generality, we can assume that the maximum is attained by $n_{1} + n_{4}$.
We need to distinguish between the two cases where the minimum is attained by $m_{1} + m_{4}$ and by $m_{2} + m_{3}$.

\medskip

\noindent{\bf Case 1: $ \min\big[m_{1} + m_{4}, m_{2} + m_{3}\big] = m_{1} + m_{4}$.}   
Since $(G \ \ H)$ is a square matrix, this implies that $n_{1} - m_{1} = 
n_{2} - m_{2} + 1$.  In this case we group the columns of  $(G \ \ H)$
in the following order.
\begin{enumerate}
\item The first $n_{1} + n_{4} - n_{2} - n_{3}$ columns of $G$
\item Then the next $n_{3} - m_{3}$ columns of $G$
\item Then all $n_{2} - m_{2} + 1 (= n_{1} - m_{1})$ columns of $H$
\item  Then the remaining $ m_{2} + m_{3} - m_{1} - m_{4}$ columns of $G$.
\end{enumerate}
This choice has the property that the middle two blocks of columns together
have the desired form, since we have chosen to start including columns from
$G$ and $H$ precisely when they both have nonzero entries in the same rows,
and stopping the formation of these when they stop having nonzero entries in the
same rows, which has the correct form.  Note we have used all columns of
$G$ since 
\[
n_{1} + n_{4} - n_{2} - n_{3} +n_{3} - m_{3} +
m_{2} + m_{3} - m_{1} - m_{4}  = 
(n_{1} - m_{1}) + (n_{4} - m_{4}) - (n_{2} - m_{2}) =
n_{4} - m_{4} + 1.
\]

\medskip

\noindent{\bf Case 2: $ \min\big[m_{1} + m_{4}, m_{2} + m_{3}\big] = m_{2} + m_{3}$.}  
Note that since $(G \ \ H)$ is square, this implies that
$n_{1} - m_{3} = n_{2} - m_{4} + 1$.
In this case, we do not need to reorder the columns to obtain the desired
form. 

We mention how to block the columns to obtain the desired form.
\begin{enumerate}
\item The first $n_{1} + n_{4} - n_{2} - n_{3}$ columns of $G$
\item Then the next $n_{3} - m_{3}$ columns of $G$
\item Then the first $n_{1} - m_{1}$ columns of $H$
\item  Then the remaining $ m_{1} + m_{4} - m_{2} - m_{3}$ columns
of $H$.  
\end{enumerate}
Note that we have the desired number of columns from the second and third
blocks, and we have chosen them so that that those columns have nonzero
entries at exactly the same rows.  Furthermore, we have used
all columns of $G$ since
$$n_{1} + n_{4} - n_{2} - n_{3} + n_{3} - m_{3} = 
n_{1} + n_{4} - n_{2} - m_{3}  =  n_{4} - m_{4} + 1$$
and all columns of $H$ since
$$n_{1} - m_{1} + m_{1} + m_{4} - m_{2} - m_{3}  = n_{1} + m_{4} - m_{2} - m_{3} = 
n_{2} - m_{2} + 1.  \qedhere$$
\end{proof}

\begin{ex}  
We can rewrite the matrix in Example \ref{ex:quadruple} as 
\[
\left(
\begin{array}{c|ccccc}
c_{3} & 0     & 0     & 0     & 0     & 0     \\
\hline
c_{2} & a_{2} & 0     & 0     & c_{3} & 0     \\
c_{1} & a_{1} & a_{2} & 0     & c_{2} & c_{3} \\
c_{0} & a_{0} & a_{1} & a_{2} & c_{1} & c_{2} \\
0     & 0     & a_{0} & a_{1} & c_{0} & c_{1} \\
0     & 0     & 0     & a_{0} & 0     & c_{0}
\end{array} \right).
\]
Here the the $5 \times 5$ matrix in the lower righthand corner
is the Sylvester matrix, the matrix $S'$ is the $1 \times 1$ matrix in the upper lefthand corner, and the matrix $S''$ is the
empty matrix.
\end{ex}

\begin{proof} [Proof of Theorem \ref{thm:parameqcoeff}] 
We will show that if the number of parameters equals the number of non-monic coefficients, then the matrix $(G \ \ H)$ is square. By Propositions \ref{prop:abinvert} and  \ref{prop:squarefullrank}, this will imply that the model is locally identifiable.
  
Let $M = N_{1} \land N_{2}$ be a spring-dashpot model joined in series
from $N_{1}$ and $N_{2}$, where $N_{1}$ has constitutive equation
$ L_{1} \epsilon_1 = L_{2} \sigma_1$ of shapes $[n_{1}, m_{1}]$ and $[n_{2}, m_{2}]$,
respectively,  and   $N_{2}$ has constitutive equation
$ L_{3} \epsilon_2 = L_{4} \sigma_2$ of shapes $[n_{3}, m_{3}]$ and $[n_{4}, m_{4}]$,
respectively.
By induction, we can 
assume that the number of parameters equals the number of non-monic coefficients for the systems $N_{1}$ and $N_{2}$, i.e.~there are $n_1-m_1+n_2-m_2+1$ parameters in the first and $n_3-m_3+n_4-m_4+1$ in the second.   Assume the number of parameters equals the number of non-monic coefficients in this full system, i.e.~
\begin{eqnarray*}
\lefteqn{
n_1-m_1+ n_2-m_2+n_3-m_3+n_4-m_4 + 2  =}   \\
&  &  
\max\left\{n_1+n_4,n_2+n_3\right\}-\min\left\{m_1+m_4,m_2+m_3\right\}+1+n_1-m_1+n_3-m_3.
\end{eqnarray*}
 Subtracting $n_1-m_1+n_3-m_3$ from both sides, we get that 
$$
n_2-m_2+n_4-m_4+2 = \max\left\{n_1+n_4,n_2+n_3\right\}-\min\left\{m_1+m_4,m_2+m_3\right\}+1.
$$  
From the definition of $(G \ \ H)$, this means the number of rows equals the number of columns, so that $(G \ \ H)$ is square.  

The argument for the parallel extension is identical and is omitted. 
\end{proof}

\begin{proof} [Proof of Theorem \ref{thm:localMT}] 
Theorem \ref{thm:parameqcoeff} shows that the model is locally identifiable if and only if the number of parameters equals the number of non-monic coefficients.  Thus the identifiability properties of the $20$ cases in Tables \ref{seriestable} and \ref{paralleltable} are determined by checking if the numbers in the columns corresponding to the number of parameters and the number of non-monic coefficients are equal.
\end{proof}


\subsubsection*{Global identifiability}

We now determine necessary and sufficient conditions for global identifiability.

\begin{prop}
Let $M = N_{1} \land N_{2}$ be a spring-dashpot model joined in series
from $N_{1}$ and $N_{2}$, where $N_{1}$ has constitutive equation
$ L_{1} \epsilon_1 = L_{2} \sigma_1$ of shapes $[n_{1}, m_{1}]$ and $[n_{2}, m_{2}]$,
respectively,  and   $N_{2}$ has constitutive equation
$ L_{3} \epsilon_2 = L_{4} \sigma_2$ of shapes $[n_{3}, m_{3}]$ and $[n_{4}, m_{4}]$,
respectively.  Then the model $M$ is globally identifiable if and
only if
\begin{enumerate}
\item  $N_{1}$ and $N_{2}$ are globally identifiable,
\item  The shape factorization problem for the quadruple $([n_{1}, m_{1}], [n_{2}, m_{2}], 
[n_{3}, m_{3}], [n_{4}, m_{4}])$ generically has a unique solution.
\end{enumerate}
Similarly, if $M = N_{1} \lor N_{2}$ is a spring-dashpot model joined in parallel
from $N_{1}$ and $N_{2}$, then $M$ is globally identifiable if and only if
\begin{enumerate}
\item  $N_{1}$ and $N_{2}$ are globally identifiable, and
\item   The shape factorization problem for the quadruple $([n_{2}, m_{2}], [n_{1}, m_{1}],  
 [n_{4}, m_{4}], [n_{3}, m_{3}])$ generically has a unique solution.
\end{enumerate}
\end{prop}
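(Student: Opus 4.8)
The plan is to exploit the same factorization of the parameter map that drove the local analysis, now tracking the exact \emph{number} of generic preimages rather than merely their finiteness. Write the map from the parameters of $M$ to the coefficients of its constitutive equation as a composition $\psi \circ \phi$. Here $\phi = (\phi_1, \phi_2)$ sends the parameters of $N_1$ and $N_2$ to the coefficient vectors of the four operators $(L_1,L_2,L_3,L_4)$, with $\phi_1$ depending only on $N_1$'s parameters and $\phi_2$ only on $N_2$'s; and $\psi$ sends the quadruple $(L_1,L_2,L_3,L_4)$ to the pair $(f,g)=(L_1L_3,\,L_1L_4+L_2L_3)$ (cf.~\eqref{eq:series}). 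By Definition \ref{defn:id}, global identifiability of $M$ is exactly the statement that $\psi \circ \phi$ is generically one-to-one.

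First I would reduce to the locally identifiable case. Since a one-to-one map is in particular finite-to-one, global identifiability of $M$ implies local identifiability of $M$, and hence, by the local-identifiability criterion established above (local identifiability of $M$ is equivalent to local identifiability of $N_1,N_2$ together with goodness of the quadruple), that $N_1$ and $N_2$ are locally identifiable and that the quadruple of shapes is good. Under local identifiability, each $\phi_i$ is a map between spaces of equal dimension (number of parameters equals number of non-monic coefficients, by Theorem \ref{thm:parameqcoeff}) that is generically finite, hence dominant; and, the quadruple being good, $\psi$ is likewise generically finite and dominant by Propositions \ref{prop:abinvert} and \ref{prop:squarefullrank}. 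Thus for the forward direction I may assume all three maps $\phi_1,\phi_2,\psi$ are dominant and generically finite, while for the converse these same hypotheses are supplied by conditions (1) and (2).

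The heart of the argument is then the multiplicativity of generic fiber sizes along the composition. For generic $(f,g)$ in the image, the fiber $(\psi\circ\phi)^{-1}(f,g)$ is computed by first listing the finitely many quadruples solving the shape factorization problem, which are exactly $\psi^{-1}(f,g)$, and then counting parameter preimages of each under $\phi$. Because $\phi_1,\phi_2$ are dominant, the generic solutions of the shape factorization problem lie in the image of $\phi$, and the preimage count factors as
\[
\bigl|(\psi\circ\phi)^{-1}(f,g)\bigr| \;=\; \bigl|\psi^{-1}(f,g)\bigr|\,\cdot\,\deg\phi_1\,\cdot\,\deg\phi_2 ,
\]
equivalently $\deg(\psi\circ\phi)=\deg\psi\cdot\deg\phi_1\cdot\deg\phi_2$, the standard multiplicativity of degrees of dominant generically-finite maps. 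All three factors are positive integers, so the product equals $1$ precisely when each factor equals $1$: that $\deg\phi_1=\deg\phi_2=1$ is exactly global identifiability of $N_1$ and $N_2$, and that $\bigl|\psi^{-1}(f,g)\bigr|=1$ for generic $(f,g)$ is exactly the statement that the shape factorization problem generically has a unique solution. This yields both directions of the series case; the parallel case is identical after interchanging the roles of the $\epsilon$- and $\sigma$-operators, i.e.~replacing the quadruple $([n_1,m_1],[n_2,m_2],[n_3,m_3],[n_4,m_4])$ by $([n_2,m_2],[n_1,m_1],[n_4,m_4],[n_3,m_3])$ as dictated by \eqref{eq:parallel}.

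The main obstacle I expect is justifying the factorization of the fiber count: specifically, that for generic $(f,g)$ every quadruple in $\psi^{-1}(f,g)$ actually lies in the image of $\phi$ (so that no solution of the shape factorization problem is invisible to the parameterization) and that the parameterization glues together no distinct quadruples. Both follow from dominance of the $\phi_i$ together with the fact that, for generic $(f,g)$, the solutions of the shape factorization problem are themselves generic quadruples of the prescribed shapes. Making this precise over $\rr$ rather than $\cc$ requires the usual care with the word \emph{generic}, namely restricting to a dense open set on which all the relevant Jacobians and resultants are nonzero.
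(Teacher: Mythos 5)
Your proof is correct and takes essentially the same route as the paper: the paper's argument is precisely your fiber decomposition in informal dress --- it pairs unique (or multiple) solutions of the shape factorization problem with unique (or multiple) parameter preimages for $N_1$ and $N_2$ to ``solve all the way back,'' which is your multiplicativity of generic fiber counts for $\psi\circ\phi$. Your explicit flagging of the dominance of $\phi_1,\phi_2$ (needed so that alternative factorizations are actually realizable by parameters, and so that genericity transfers between the $(f,g)$-space and the image of the model) makes precise a step the paper's proof uses only implicitly.
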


\begin{proof}
We handle the case of series extensions, parallel extensions being identical.
Let $M = N_{1} \land N_{2}$.  Clearly, $N_{1}$ and $N_{2}$ must be
globally identifiable otherwise we could give two sets of parameters
yielding the same constitutive equation for $N_{1}$, which could then
be combined with parameters for $N_{2}$ to get two sets of parameters
for $M$ yielding the same constitutive equation.

Now if the shape factorization problem has a unique solution, there is
a unique way to take the constitutive equation for $M$ and solve for the
constitutive equations for $N_{1}$ and $N_{2}$, since $N_{1}$ and $N_{2}$
are globally identifiable, there is a unique way to solve for parameters
of those models giving a unique solution for parameters for $M$.  Conversely,
if there were multiple solutions to the shape factorization problem, then
by global identifiability of $N_{1}$ and $N_{2}$, we could solve all the
way back to get multiple parameter choices for the same parameter choice for $M$.
\end{proof}

Note that in our analysis of the shape factorization problem in the
previous section, we saw that once $L_{1}$ and $L_{3}$ are chosen among all their finitely
many values, when the model is locally identifiable there is a unique way to
then construct $L_{2}$ and $L_{4}$.  Hence, the shape factorization
problem has a unique solution when there is a unique way to 
factor $f = L_{1}L_{3}$.  This happens if and only if either $n_{1} = m_{1}$
or $n_{3} = m_{3}$, otherwise, generically, we can exchange roots of $L_{1}$
and $L_{3}$ giving multiple solutions.

\begin{cor} \label{cor:globidconstzero} 
Suppose that $M = N_{1} \land N_{2}$ is globally identifiable.
Then either $N_{1}$ or $N_{2}$ must have been one of a spring, a dashpot, or a Maxwell model.
Suppose that $M = N_{1} \lor N_{2}$ is globally identifiable.
Then either $N_{1}$ or $N_{2}$ must have been one of a spring, a dashpot, or a Voigt model.
\end{cor}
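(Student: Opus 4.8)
The plan is to derive the corollary directly from the preceding global-identifiability Proposition together with the remark that immediately follows it. That remark reduces global identifiability of a series (resp. parallel) join to the requirement that the monic polynomial $f$ of the associated shape factorization problem factor \emph{uniquely}, and records that this occurs exactly when one of the two factors of $f$ is a monomial. For a series connection $M = N_1 \land N_2$ the monic side is $f = L_1 L_3$, so uniqueness forces $n_1 = m_1$ or $n_3 = m_3$; that is, the operator acting on $\epsilon$ in one of $N_1, N_2$ must be a single monomial. For a parallel connection the quadruple is reordered to $([n_2,m_2],[n_1,m_1],[n_4,m_4],[n_3,m_3])$, so the product that must factor uniquely is $f = L_2 L_4$ and the condition becomes $n_2 = m_2$ or $n_4 = m_4$ — now it is the operator acting on $\sigma$ that must be a monomial. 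Keeping straight which operator ($\epsilon$-side versus $\sigma$-side) is constrained, and matching the reordered quadruple to the actual monic side in \eqref{eq:parallel}, is the one piece of bookkeeping that requires care.

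Next I would read the monomial condition against Table \ref{shapestable}, type by type. In the series case I ask when the $\epsilon$-shape has equal high and low order: type A has $\epsilon$-shape $[n,0]$, a monomial only for $n=0$; type B has $[n+1,1]$, a monomial only for $n=0$; type D has $[n,1]$, a monomial only for $n=1$; and type C has $[n+1,0]$, which is never a monomial since $n+1 \geq 1$. The three surviving cases are precisely the shapes of a spring, a dashpot, and a Maxwell element, giving the series half of the statement. In the parallel case the $\sigma$-shape is $[n,0]$ for every type, a monomial exactly when $n=0$; imposing $n=0$ leaves type A (spring), type B (dashpot), and type C (Voigt), while type D is excluded because $n=0$ would force the invalid $\epsilon$-shape $[0,1]$. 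This yields spring, dashpot, or Voigt, the parallel half.

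The remaining, and I expect main, obstacle is to upgrade ``$N_i$ has the $\epsilon$- (resp. $\sigma$-) shape of a spring, dashpot, Maxwell, or Voigt model'' to ``$N_i$ \emph{is} that element,'' rather than some larger network sharing the same minimal shape. Here I would use that $N_i$ is globally, hence locally, identifiable, so by Theorem \ref{thm:parameqcoeff} its number of parameters equals its number of non-monic coefficients; the minimal shapes above carry exactly $1$ non-monic coefficient (spring, dashpot) or $2$ (Maxwell, Voigt), forcing $N_i$ to have $1$ or $2$ elements. Since each spring and each dashpot contributes exactly one parameter, a one-element network of the prescribed shape must be a spring (shape $[0,0]$) or a dashpot (shape $[1,1]$), and the only two-element networks realizing the Maxwell (resp. Voigt) shape are the Maxwell (resp. Voigt) models themselves. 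The shape computations being immediate from Table \ref{shapestable}, it is this parameter-counting identification that carries the genuine content of the proof.
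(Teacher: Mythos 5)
Your proposal is correct, and it shares the paper's starting point: both arguments reduce global identifiability of the join, via the preceding proposition and the remark following it, to uniqueness of the factorization $f = L_1L_3$ (series) or $f = L_2L_4$ (parallel), i.e.\ to one of the two relevant operators being a monomial; your bookkeeping of which side ($\epsilon$ versus $\sigma$) is constrained under the reordered quadruple is right. Where you genuinely diverge is in how the monomial condition gets converted into ``that component \emph{is} a spring, dashpot, Maxwell, or Voigt element.'' The paper's proof rests on a structural induction: it asserts that these four are the only locally identifiable models whose constitutive equation has a single-term operator, on the grounds that once both operators of a component have at least two terms, no further series or parallel combination can recreate a single-term operator (a fact to be checked against the cases underlying Tables~\ref{seriestable} and~\ref{paralleltable}). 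You avoid this induction entirely: the monomial condition, read against Table~\ref{shapestable}, pins down the constrained component's type and its value of $n$ (A or B with $n=0$, D with $n=1$ for series; A, B, or C with $n=0$ for parallel), hence the full shape of its constitutive equation and its number of non-monic coefficients ($1$ or $2$); Theorem~\ref{thm:parameqcoeff} then caps its number of elements at $1$ or $2$, and a finite enumeration of one- and two-element networks concludes. Your route is arguably more self-contained --- the paper's one-sentence induction leaves implicit the cases where a component itself has a single-term operator (e.g.\ a Maxwell or Voigt factor combined with something larger), which must still be checked --- whereas the paper's route yields a reusable intermediate fact (no locally identifiable network with three or more elements ever has a single-term operator) that is independent of which side the monomial must occupy.
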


\begin{proof} 
The four models given by the spring, dashpot, Voigt, and Maxwell elements are
the only four locally identifiable models that have the property that
at least one of the differential operators in its constitutive equation
has exactly one term.  This can be seen by analyzing the four types
(A,B,C,D) and looking at all possibilities that arise on combining two
equations.  Once both operators do not have a single term, no model combined from such a model
can have an operator with a single term.

The three choices for the series connection (spring, a dashpot, or a Maxwell model)
are the three of four models that put a differential operator with a single
term in the correct place so there could be a unique solution to the shape factorization
probelm.  Similarly for the parallel connection. 
\end{proof}

\begin{proof} [Proof of Theorem \ref{Thm:global}]
Clearly a globally identifiable model is locally identifiable. 
By Corollary \ref{cor:globidconstzero}, we must be able to construct such
a globally identifiable model by adding at each step either a spring, dashpot,
Maxwell or Voigt element at each step, but when adding a Maxwell 
element it must be used in series
and when using a Voigt element it must have been added in parallel.  However,
adding a Maxwell element in series can be achieved by adding a spring and then a dashpot both
in series.  Similarly, adding a Voigt element in parallel can be achieved by adding
a spring and then a dashpot both in parallel.  Hence, we can work only adding
springs or dashpots at each step. 
\end{proof}

\section*{Acknowledgments}
Adam Mahdi was partially supported by the VPR project under NIH-NIGMS grant \#1P50GM094503-01A0 sub-award to NCSU.
Nicolette Meshkat was partially supported by the David and
Lucille Packard Foundation. Seth Sullivant was partially supported by the David and Lucille Packard Foundation and the US National Science Foundation (DMS 0954865).


\comment{
\section*{Figure Legends}

\section*{Tables}
}

\end{document}